\documentclass[12pt,reqno]{amsart}    
\usepackage{setspace}
\usepackage{amsmath,amssymb}
\usepackage{amsthm}
\usepackage[lite]{amsrefs}
\usepackage{amsfonts}
\usepackage{url}
\usepackage{enumitem}
\usepackage{graphicx}
\usepackage{color}
\usepackage{epstopdf}
\usepackage{tikz-cd}
\usepackage{pgfplots} 
\pgfplotsset{compat=newest}
\usetikzlibrary{decorations.pathreplacing,angles,quotes}
\usetikzlibrary{bending}
\usepackage{mathtools}
\usepackage[colorlinks]{hyperref}
\usepackage[all]{hypcap}
\usepackage{esint}

\numberwithin{equation}{section}

\oddsidemargin 0 truemm \evensidemargin 0 truemm \marginparsep 0pt
\topmargin -50pt \textheight 240 truemm \textwidth 160 truemm
\parindent 0em \parskip 1ex
\DefineSimpleKey{bib}{myurl}
\newcommand\myurl[1]{\url{#1}}

\BibSpec{webpage}{%
	+{}{\PrintAuthors} {author}
	+{,}{ \textit} {title}
	+{}{ \parenthesize} {date}
	+{,}{ } {journal}
	+{,}{ } {note}
	+{}{ \myurl} {myurl}
	+{.}{ } {transition}
}

\newcommand{\sub}{\subseteq}

\newcommand{\R}{\mathbb{R}}
\newcommand{\C}{\mathbb{C}}

\newcommand{\N}{\mathbb{N}}

\newcommand{\bs}{\backslash}

\newcommand{\Ga}{\alpha}
\newcommand{\Gb}{\beta}
\newcommand{\Gd}{\delta}
\newcommand{\Ge}{\varepsilon}

\newcommand{\Gl}{\lambda}
\newcommand{\Go}{\omega}

\newcommand{\Gs}{\sigma}

\numberwithin{chap}{section}
\newtheorem{thm}{Theorem}
\numberwithin{thm}{section}
\newtheorem{prop}[thm]{Proposition}
\newtheorem{defn}[thm]{Definition}
\newtheorem{lem}[thm]{Lemma}
\newtheorem{cor}[thm]{Corollary}

\DeclarePairedDelimiter{\norm}{\lVert}{\rVert}
\DeclarePairedDelimiter{\abs}{\lvert}{\rvert}
\DeclarePairedDelimiter{\ip}{\langle}{\rangle}

\makeatletter
\let\oldabs\abs
\def\abs{\@ifstar{\oldabs}{\oldabs*}}
\let\oldnorm\norm
\def\norm{\@ifstar{\oldnorm}{\oldnorm*}}
\let\oldip\ip
\def\ip{\@ifstar{\oldip}{\oldip*}}
\makeatother

\begin{document}           

\pagestyle{myheadings} \thispagestyle{empty} \markright{}

\title{Uniform $l^2$-decoupling in $\mathbb R^2$ for Polynomials}

\author{Tongou Yang}

\address{Department of Mathematics, University of British Columbia, Vancouver, B.C. Canada V6T 1Z2}
\email{toyang@math.ubc.ca}

\subjclass[2010]{42B15, 11L07}
\maketitle

\begin{abstract}
   For each positive integer $d$, we prove a uniform $l^2$-decoupling inequality for the collection of all polynomials phases of degree at most $d$. Our result is intimately related to \cite{MR4078083}, but we use a different partition that is determined by the geometry of each individual phase function.
\end{abstract}

{\textbf{\textit{Keywords: }} Decoupling, plane, polynomial, curvature}

\section{Introduction}

\subsection{Background}
Bourgain and Demeter's breakthrough $l^2$-decoupling theorem \cite{MR3374964} in 2015, based on previous founding work by \cites{MR2288738,MR1800068} and many others, has led to substantial applications in harmonic analysis and number theory. Its original proof has also been studied in further details, leading to improvements of the original result, such as \cite{Li}. Moreover, based on the techniques they developed, a lot of analogues and generalisations of the classical decoupling theorem have been also well studied, most notably \cite{MR3548534} which connects harmonic analysis to number theory. Similar results include, but not limited to, \cites{MR4078083,MR3736493,Guo,Kemp,MR3939285}.

In \cite{MR4078083}, the authors studied the $l^2$-decoupling inequality on $\R^2$ for general real analytic phase functions other than the unit parabola. They proved that for all $2\leq p\leq 6$ (the same range of exponents for the classical decoupling inequality in \cite{MR3374964}), and for every real analytic phase function $\phi$ with $\phi''$ vanishing to a finite order $r$ on $[0,1]$, the extension operator version of the (local) $l^2$-decoupling inequality holds with the partition of $[0,1]$ by intervals of the same length $\Gd^{1/2}$, but with the smallest physical square replaced by a long thin tube of dimensions $\Gd^{-1}$ and $\Gd^{-1-r/2}$.

Our result will be closely related to \cite{MR4078083}. For each $d\geq 1$, we will prove a uniform $l^2$-decoupling inequality with the phase function belonging to the family of all polynomials of degree at most $d$. Our result will be significantly different from \cite{MR4078083} in the following aspects. First and most importantly, instead of the canonical partition by intervals of length $\Gd^{1/2}$, we will consider partitions that are closely related to the geometry of individual phase functions, which plays a key role in the uniformity of the decoupling inequalities we will prove. A similar formulation can also be found in Section 12.6 of Demeter's textbook \cite{MR3971577}, where he used such technique to prove a decoupling inequality for a single analytic phase function.

Second, in terms of technicality, we will be dealing with neighbourhood versions of the decoupling inequality without physical localisation. In the appendix, we show that our result actually implies Proposition 2.1 of \cite{MR4078083} in the case when $\nu\geq 0$. Nevertheless, one notable common feature of our results is that we both use the main result of \cite{MR3374964} as a foundation of our argument.

\subsection{Motivation}
To familiarise the reader of the motivation of this paper, we start with the classical Bourgain-Demeter decoupling of the unit parabola $\phi(s)=s^2$, $s\in [0,1]$. Let $\Gd\in 2^{-2\N}$ and partition $[0,1]$ into intervals $I$ of length $\Gd^{1/2}$. Then the $\Gd$-neighbourhood of the graph of $\phi$ over each $I$ is essentially a rectangle of side length $\Gd^{1/2}\times \Gd$ with the longer side pointing in the tangent line of $\phi$ at $(c_I,\phi(c_I))$ where $c_I$ is the centre of $I$. This feature is essentially due to the non-vanishing curvature of $s^2$ on $[0,1]$.

However, if we consider $\phi(s)=s^3$, this is no longer the case when $I$ is close to the origin. Instead of using the canonical partition, we will construct another partition that suits the geometry of the curve better. Let $a_0=0$. Starting from $a_0$, we find the largest number $a_1$ such that the $\Gd$-neighbourhood of the tangent line at $a_0$ contains the graph of $\phi$ over $[a_0,a_1]$. In this case, we see that $a_1=\Gd^{1/3}$. We then start from $a_1$ and find the largest number $a_2$ such that the $\Gd$-neighbourhood of the tangent line at $a_1$ contains the graph of $\phi$ over $[a_1,a_2]$. Proceeding in this way, we obtain a sequence $a_j$ which is essentially $a_j=j^{2/3}\Gd^{1/3}$, $0\leq j\leq \Gd^{-1/2}$ if we take $\Gd\in 2^{-6\N}$. This partition of $[0,1]$ by such $a_j$ will be an ``admissible partition" for $\phi(s)=s^3$ at scale $\Gd$, which we formally define in a moment. Thus, given a family of functions $f_j$ where each $f_j$ is Fourier supported on the $\Gd$-neighbourhood of the graph of $s^3$ over $[(j-1)^{2/3}\Gd^{1/3},j^{2/3}\Gd^{1/3}]$, we ask if the following inequality holds for some range of $p$:
\begin{equation}\label{s^3}
    \norm {\sum_{j=1}^{\Gd^{-1/2}} f_j}_{L^p(\R^2)}\lesssim_{\Ge}\Gd^{-\Ge}\left(\sum_{j=1}^{\Gd^{-1/2}}\norm {f_j}^2_{L^p(\R^2)}\right)^{\frac 1 2},
\end{equation}
where the implicit constant depends on $p$ and $\Ge$ but not $\delta$ or $f_j$. As we shall see, if $2\leq p\leq 6$, then this inequality will follow as a particular case of our main result.

\subsection{Admissible partitions}
We now formally define our notion of admissible partitions. Roughly speaking, an admissible partition $\mathcal P$ for a phase function $\phi$ at scale $\Gd$ is the coarsest partition such that over each $I\in \mathcal P$, the graph of $\phi$ differs by $O(\Gd)$ from any tangent line of $\phi$ over $I$. Equivalently, $\mathcal P$ is the coarsest partition such that for each $I\in \mathcal P$, the vertical neighbourhood of thickness $\Gd$ of the graph of $\phi$ above $I$ is an approximate rectangle.

Now we start to define this notion. Let $I_0$ be a compact interval and $\phi:I_0\to \R$ be $C^2$, and let $I\sub I_0$ be a subinterval. For $r>0$, we say $I$ satisfies property $P(r)$ if
$$
\sup_{s,c\in I}|\phi(s)-\phi(c)-\phi'(c)(s-c)|\leq 2r.
$$
By Taylor expansion, the above is true if and only if 
\begin{equation}\label{Taylor}
  \sup_{s\in I}|\phi''(s)||I|^2\leq 4r.  
\end{equation}

\begin{defn}[Admissible Partition]\label{admispart}
Let $I_0$ be a compact interval and $\phi:I_0\to \R$ be $C^2$. Let $\mathcal P$ be a finite partition of $I_0$ into subintervals $I$. We say $\mathcal P$ is 
\begin{enumerate}
\item a super-admissible partition of $I_0$ for $\phi$ at scale $r$, if each $I\in \mathcal P$ satisfies property $P(r)$.
\item a sub-admissible partition of $I_0$ for $\phi$ at scale $r$, if for each pair of adjacent intervals $I,J\in \mathcal P$, their union $I\cup J$ does not satisfy property $P(r)$.
\item an admissible partition of $I_0$ for $\phi$ at scale $r$, if it is both super-admissible and sub-admissible for $\phi$ at scale $r$.
\end{enumerate}
\end{defn}
It follows immediately from the definition that $\{[(j-1)\Gd^{1/2},j\Gd^{1/2}]\}_{j=1}^{\Gd^{-1/2}}$ is an admissible partition for $s^2$ at scale $\Gd\in 2^{-2\N}$ and $\{[(j-1)^{2/3}\Gd^{1/3},j^{2/3}\Gd^{1/3}]\}_{j=1}^{\Gd^{-1/2}}$ is an admissible partition for $s^3$ at scale $\Gd\in 2^{-6\N}$. Also, in view of \eqref{Taylor}, if $\norm {\phi''}_\infty\leq 4\Gd$, then all sub-admissible partitions at scale $\Gd$ become the trivial partition $\{[0,1]\}$.

More properties of admissible partitions will be discussed in Section \ref{sectionadmis}.

\subsection{The main theorem}
To state the main theorem, we need the following standard notation.
\begin{defn}[Vertical neighbourhood]
Let $I_0\sub \R$ be an interval and let $\phi:I_0\to \R$ be any function. For any interval $I\sub I_0$ and $r>0$, let $\mathcal N^\phi_{I,r}$ denote the following curved vertical neighbourhood of the graph of $\phi$ over $I$:
$$
\mathcal N^\phi_{I,r}=\{(s,t):s\in I,|t-\phi(s)|\leq r\}.
$$
\end{defn}
\begin{defn}[Fourier truncation]\label{fourierres}
Let $1<p<\infty$. For any $f\in L^p(\R^2)$ with $\hat f\in L^1(\R^2)$ and any interval $I$, we define $f_I$ to be such that 
$$
\hat f_{I}(s,t)=\hat f (s,t)1_{I}(s).
$$
\end{defn}
{\it Remark.} By the boundedness of the Hilbert transform, the function $(s,t)\mapsto 1_I(s)$ defines a bounded Fourier multiplier on $L^p(\R^2)$ for all $1<p<\infty$, with the operator norm depending on $p$ but not $I$. In particular, we still have $f_I\in L^p(\R^2)$ for any $1<p<\infty$.

The following is one of our main theorems.
\begin{thm}\label{poly}
For any $2\leq p\leq 6$, $d\geq 1$ and $\Ge>0$, there is a constant $C_\Ge=C_{d,\Ge,p}$ such that the following is true. For any $0<\Gd\leq 1$, any polynomial $\phi$ of degree at most $d$, any sub-admissible partition $\mathcal P$ of $[0,1]$ for $\phi$ at scale $\Gd$ and any $f\in L^p(\R^2)$ Fourier supported on $\mathcal N^{\phi}_{[0,1],\Gd}$, we have
\begin{equation}\label{mainpoly}
    \norm f_{L^p(\R^2)}\leq  C_\Ge (\Gd^{-1}\sup_{s\in [0,1]}|\phi''(s)|)^\Ge\left(\sum_{I\in \mathcal P}\norm {f_I}^2_{L^p(\R^2)}\right)^{\frac 1 2}.
\end{equation}
In particular, if we choose $\phi$ such that all of its coefficients are bounded by some constant independent of $\Gd$, then the constant in \eqref{mainpoly} can be taken to be uniform in such polynomials $\phi$.
\end{thm}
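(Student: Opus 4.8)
The plan is to induct on the degree $d$ and, within each degree, to run a Pramanik–Seeger / Bourgain–Demeter style bootstrap that reduces decoupling at scale $\Gd$ to decoupling at larger scales, with the base case handled by the classical parabola result of \cite{MR3374964}. The key structural observation is that a polynomial of degree $\le d$ has at most $d-2$ points where $\phi''$ vanishes and at most $d-1$ points where $\phi''$ changes monotonicity, so $[0,1]$ decomposes into $O_d(1)$ intervals on each of which $\phi''$ is monotone and of constant sign. On such an interval the admissible partition has a predictable multiplicative structure: the breakpoints $a_j$ satisfy a relation forcing the ratios $|a_{j+1}-a_j|/|a_j-a_{j-1}|$ to be comparable, and consecutive intervals to have comparable lengths. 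Because the number of pieces is $O_d(1)$ and the triangle inequality costs only a constant, it suffices to prove \eqref{mainpoly} under the extra assumption that $\phi''$ is monotone and single-signed on $[0,1]$, absorbing the $O_d(1)$ loss into $C_\Ge$.

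Under that assumption I would rescale: given the admissible partition $\mathcal P = \{I_j\}$, group the intervals into $O(\log(\Gd^{-1}\|\phi''\|_\infty))$ blocks according to the dyadic size of $|\phi''|$ on them, so that on each block $|\phi''| \sim \mu$ for a fixed dyadic $\mu$. On a block where $|\phi''|\sim\mu$, the intervals of $\mathcal P$ have length $\sim (\Gd/\mu)^{1/2}$ by \eqref{Taylor}, which is exactly the parabola-scaling relative to the local curvature $\mu$; an affine change of variables in $s$ and $t$ normalizes this block to a phase whose second derivative is $\sim 1$, where the Bourgain–Demeter $l^2$-decoupling theorem for $2\le p\le 6$ applies and gives the decoupling into the $I_j$ with constant $C_\Ge \Gd^{-\Ge}$ (the small loss $(\Gd^{-1}\|\phi''\|_\infty)^\Ge$ absorbs both the number of blocks and the $\Ge$-loss from each application, using that $\log(\Gd^{-1}\|\phi''\|_\infty) \lesssim_\Ge (\Gd^{-1}\|\phi''\|_\infty)^\Ge$). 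The passage from "decoupling on each block" to "decoupling on all of $[0,1]$" is where the induction on $d$ enters: collapsing each block to a single interval produces a function Fourier-supported over a coarser partition, and the coarser partition is sub-admissible at scale $\Gd$ for a related lower-complexity situation — concretely, I would iterate the block decomposition, at each stage using flatness (if $\|\phi''\|_\infty \le 4\Gd$ on a piece, the partition is trivial and there is nothing to prove) as the termination condition, so the recursion depth is $O_d(1)$ rather than $\Gd$-dependent.

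The main obstacle, and the place I expect the real work to be, is controlling the transition regions: near a zero of $\phi''$ the curvature degenerates and the parabola rescaling breaks down, and one must check that an admissible partition really does behave like the model $a_j \sim j^{2/3}\Gd^{1/3}$ example from the introduction rather than producing pathologically many intervals or intervals whose neighborhoods are not approximate rectangles. This requires a quantitative lemma — presumably among the "more properties of admissible partitions" promised for Section \ref{sectionadmis} — saying that for $\phi$ a polynomial of degree $\le d$, any sub-admissible partition at scale $\Gd$ has cardinality $\lesssim_d (\Gd^{-1}\|\phi''\|_\infty)^{1/2}$, or more precisely that on each monotonicity-and-sign interval the breakpoints grow according to a power law determined by the order of vanishing of $\phi''$, uniformly in the coefficients. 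Once that combinatorial/geometric control is in hand, the analytic step is a fairly standard rescaling-plus-Bourgain–Demeter argument, and the uniformity in $\phi$ follows because every estimate is phrased through the scale-invariant quantity $\Gd^{-1}\sup_{[0,1]}|\phi''|$ and the degree $d$ alone.
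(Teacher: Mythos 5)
Your overall strategy -- reduce to decoupling for curves of (essentially) nonvanishing curvature by an affine rescaling of each region where $|\phi''|$ has a fixed size, and use the polynomial structure to bound the number of such regions -- is the right one, and a dyadic-by-curvature version of it can be made to work. But as written there are two concrete gaps. First, the step ``normalize the block so that the second derivative is $\sim 1$, where Bourgain--Demeter applies'' is incomplete: Bourgain--Demeter is a statement about the exact parabola, and to decouple a general phase with $|\psi''|\sim 1$ \emph{uniformly over a class} one must also control $\psi'''$, in order to run the Pramanik--Seeger approximation-by-parabolas iteration at intermediate scales (this is the hypothesis \eqref{curvaturegood} in Theorem \ref{quaddom}, and it is where all the work in Lemma \ref{nonzero} goes). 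For your blocks this control is in fact available: on a component $J$ of $\{|\phi''|\sim\mu\}$, Markov's inequality applied to the polynomial $\phi''$ on $J$ gives $\sup_J|\phi'''|\lesssim_d |J|^{-1}\mu$, which is exactly what survives the rescaling; but you never invoke anything of this kind, and without it the phrase ``where BD applies'' is the missing heart of the argument. Second, your claim that on such a block the intervals of $\mathcal P$ have length $\sim(\Gd/\mu)^{1/2}$, and that consecutive intervals have comparable lengths, is false for a merely sub-admissible partition: sub-admissibility only forces a \emph{lower} bound on the length of the union of two adjacent intervals, and gives no upper bound on individual intervals. Handling arbitrary sub-admissible partitions requires the pairing, dyadic pigeonholing in length, and tiling arguments of Propositions \ref{number} and \ref{tiling}.

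The induction on the degree $d$, as you describe it, does not function. Decoupling $f$ into the $O_d(\log(\Gd^{-1}\norm{\phi''}_\infty))$ dyadic curvature blocks is free by the triangle inequality and Cauchy--Schwarz (a $\log^{1/2}$ loss is absorbed into $(\Gd^{-1}\norm{\phi''}_\infty)^{\Ge}$), so no induction is needed there; conversely, the ``lower-complexity situation'' obtained by collapsing blocks is not a polynomial of lower degree, and the claimed recursion depth $O_d(1)$ is unjustified -- for $\phi(s)=s^d$ the curvature runs over $\sim\log(\Gd^{-1})$ dyadic scales and your termination criterion ($\norm{\phi''}_\infty\le 4\Gd$ on a piece) is only reached after essentially all of them. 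The paper avoids both issues by inducting on the \emph{scale} instead: it isolates a single bad set $B=\{|\phi''|<\Gs(\sup|\phi''|+\sup|\phi'''|)\}$, applies nonvanishing-curvature decoupling off $B$ (where $|\phi''|$ varies by at most the constant factor $\Gs^{-1}$, so no dyadic splitting is needed), and uses the Stein--Wainger sublevel-set bound $|B|\lesssim_d\Gs^{1/d}$ so that rescaling each component of $B$ to unit length multiplies the scale by a large constant $M$; the resulting recursion $D^d_p(\Gd)\le K(C_{\Ge,M}\Gd^{-\Ge}+\sup_{\Gd'\ge M\Gd}D^d_p(\Gd'))$ closes after $O(\log\Gd^{-1}/\log M)$ steps. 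Either route can be completed, but yours needs the Markov-inequality input and a correct treatment of sub-admissible partitions spelled out, and the degree induction should be discarded.
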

 
\subsection{Polynomial-like functions}
Theorem \ref{poly} is highly reliant on some special feature of the collection of polynomials of degree $d$, which we formally state as follows.

\begin{defn}[Polynomial-like functions]\label{polylike}
For $d\geq 1$, let $C_d>1$ be a constant depending on $d$ only. We define $\mathcal D_d=\mathcal D_d(C_d)$ to be the collection of all functions $\phi\in C^3([0,1])$ obeying the following condition: for each $0<\Gs<C_d^{-d}$ and any interval $J\sub [0,1]$, the set
\begin{equation}\label{defB}
    B=B(\phi,J):=\{s\in J:|\phi''(s)|<\Gs(\sup_{s\in J}|\phi''(s)|+|J|\sup_{s\in J}|\phi'''(s)|)\}
\end{equation}
is a union of at most $C_d$ disjoint subintervals relatively open in $J$ and also satisfies $|B|\leq C_d\Gs^{1/d}|J|$.
\end{defn}

The previous main theorem \ref{poly} follows as an immediate corollary of the following slightly more general theorem and Lemma \ref{lemboot}.
\begin{thm}[Uniform $l^2$-decoupling for functions in $\mathcal D_d$]\label{uniform}
For any $2\leq p\leq 6$, $d\geq 1$, $C_d>1$ and $\Ge>0$, there is a constant $C_\Ge=C_{p,d,C_d,\Ge}$ such that the following is true. For any $0<\Gd\leq 1$, any $\phi\in \mathcal D_d=\mathcal D_d(C_d)$, any sub-admissible partition $\mathcal P$ of $[0,1]$ for $\phi$ at scale $\Gd$ and any $f\in L^p(\R^2)$ Fourier supported on $\mathcal N^{\phi}_{[0,1],\Gd}$, we have
\begin{equation}\label{uniformdecoupling}
    \norm f_{L^p(\R^2)}\leq  C_\Ge \norm{\phi''}^\Ge_\infty\Gd^{-\Ge}\left(\sum_{I\in \mathcal P}\norm {f_I}^2_{L^p(\R^2)}\right)^{\frac 1 2}.
\end{equation}
If, in addition, $\norm {\phi''}_\infty$ can be taken to be bounded by some constant independent of $\Gd$, then \eqref{uniformdecoupling} holds with the constant independent of the choice of $\phi$ in such subcollection.
\end{thm}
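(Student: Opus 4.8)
The plan is to run an induction on scales (a self-improving/bootstrapping argument in $\delta$), reducing everything to the flat Bourgain--Demeter decoupling theorem \cite{MR3374964} as the base case. Fix $\phi\in\mathcal D_d$ and let $\mathcal P$ be a sub-admissible partition at scale $\delta$. I would first treat the trivial case: if $\norm{\phi''}_\infty\le 4\delta$, then as noted after Definition \ref{admispart} every sub-admissible partition is $\{[0,1]\}$, and \eqref{uniformdecoupling} holds trivially with constant $1$. So assume $\norm{\phi''}_\infty>4\delta$. The heart of the argument is a two-step decoupling. First, split $[0,1]$ into a bounded number (depending only on $d$) of intervals $J$ on which $\phi''$ is comparable to a single value, or on which $\phi''$ is ``small'' in the sense of the set $B$ from Definition \ref{polylike}; the polynomial-like property guarantees such a decomposition exists with controlled combinatorics and controlled measure for the small part. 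Since this is an $O_d(1)$-fold splitting, it costs only a constant factor in the $l^2$-decoupling inequality (triangle inequality plus Cauchy--Schwarz). On each ``good'' piece $J$ where $\phi''\sim\lambda_J$ with $\lambda_J\gtrsim\delta$, an affine rescaling in both variables turns the graph of $\phi$ over $J$ into (essentially) a graph with $\phi''\sim 1$ on a unit interval, at a new thickness $\delta'$; applying flat Bourgain--Demeter (in the form of $l^2L^p$ decoupling into $\delta'^{1/2}$-caps, valid for $2\le p\le 6$) and rescaling back decouples $J$ into subintervals of the canonical length relative to $J$. On the ``small'' piece, $|B|$ is tiny, so iterating/recursing on $B$ (which again is polynomial-like with smaller $\norm{\phi''}_\infty$) handles it.

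The key structural point is to relate the canonical caps produced by this rescaled flat decoupling to the intervals of an \emph{arbitrary} sub-admissible $\mathcal P$. Here I would invoke the properties of admissible partitions promised in Section \ref{sectionadmis}: any two sub-admissible partitions at the same scale are comparable in the sense that each interval of one is contained in a bounded union of intervals of the other (this is exactly the ``coarsest partition'' heuristic made rigorous). Thus after decoupling into the canonical caps, one can re-aggregate (flat, trivial $l^2$ decoupling, i.e. reverse triangle + Cauchy--Schwarz costs only a constant) to land on the intervals of the given $\mathcal P$. One should also check Definition \ref{fourierres}: the Fourier restriction operators $f\mapsto f_I$ are uniformly bounded on $L^p$ by the Hilbert-transform remark, so passing between vertical-neighbourhood decoupling and the strip truncations $f_I$ is legitimate and loses only a $p$-dependent constant.

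The bookkeeping of constants is what must be done carefully: at each stage we pick up a factor like $C(\text{flat B--D},\epsilon)\cdot(\delta'^{-1})^{\epsilon}\cdot O_d(1)$, and after the rescaling $\delta'$ is comparable to $\delta/\lambda_J\cdot(\text{something})$, so the accumulated power of $\delta^{-1}$ must be matched against the claimed $\norm{\phi''}_\infty^\epsilon\delta^{-\epsilon}$. The number of iterations of the induction before reaching the trivial scale is $O_d(\log(\norm{\phi''}_\infty/\delta))$, or rather the recursion depth is bounded because at each step $\norm{\phi''}_\infty$ on the small piece drops by a fixed power; summing a geometric-type series in the $\epsilon$-exponents gives the stated bound. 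The last sentence of the theorem is then immediate: if $\norm{\phi''}_\infty=O(1)$ uniformly, the right-hand side of \eqref{uniformdecoupling} is $\le C_\epsilon\delta^{-\epsilon}(\sum_I\norm{f_I}^2)^{1/2}$ with $C_\epsilon$ depending only on $p,d,C_d,\epsilon$.

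I expect the main obstacle to be the rescaling step on a good piece $J$: one must verify that the affine map normalizing $\phi$ on $J$ genuinely produces a function to which flat Bourgain--Demeter applies with a thickness parameter $\delta'$ that is \emph{uniform} over all $\phi\in\mathcal D_d$ and all such $J$ — in particular that the ``lower-order'' deviation of $\phi$ from its quadratic model on $J$ is absorbed into the $\delta'$-neighbourhood rather than destroying the quadratic structure. This is precisely where the two defining features of $\mathcal D_d$ (bounded number of components of $B$, and $|B|\lesssim\sigma^{1/d}|J|$) are used, together with the fact that $\sup_J|\phi'''|\cdot|J|$ is controlled by $\sup_J|\phi''|$ up to the polynomial-like comparison — so that $\phi''$ cannot oscillate too wildly across $J$, and the canonical caps at the rescaled scale really do tile $J$ compatibly with $\mathcal P$.
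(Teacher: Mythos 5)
Your overall architecture (split $[0,1]$ according to the size of $\phi''$ using the set $B$ of Definition \ref{polylike}, apply nonvanishing-curvature decoupling on the good part, rescale and recurse on the small part, with Bourgain--Demeter as the base case) is indeed the paper's architecture. But there is a genuine gap in the step where you pass from the canonical caps produced by the rescaled flat decoupling to the intervals of an \emph{arbitrary} sub-admissible partition $\mathcal P$. The comparability lemma you invoke --- that any two sub-admissible partitions at the same scale have each interval contained in a bounded union of intervals of the other --- is false: the trivial partition $\{[0,1]\}$ is vacuously sub-admissible at every scale, so intervals of $\mathcal P$ may be arbitrarily much longer than the canonical caps. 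Moreover, the ``re-aggregation'' you propose is not a trivial $l^2$ step: for $p>2$ the inequality $\left(\sum_{\tau\subset I}\norm{f_\tau}_{L^p}^2\right)^{1/2}\lesssim\norm{f_I}_{L^p}$ over finer frequency intervals $\tau$ fails in general (test it on $N$ frequency-disjoint wave packets placed at far-separated spatial locations: the left side is $\sim N^{1/2}$ while the right side is $\sim N^{1/p}$). So decoupling into fine caps does \emph{not} imply decoupling into a coarser partition, and this is exactly the nontrivial content behind the paper's remark that the parabola can be decoupled into any coarsening of the canonical partition.

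The paper's resolution (Theorem \ref{quaddom}) is to never decouple finer than the target. One first passes to the coarsened partition $\mathcal P'$ of Proposition \ref{number}, whose intervals have length at least $2(\delta/\norm{\phi''}_\infty)^{1/2}$, sorts them into $O(\log\delta^{-1})$ dyadic length classes, and for the class of common length $\sim l_0$ applies Lemma \ref{nonzero} at the \emph{matched coarser scale} $l^2$ with $l\sim l_0$ --- legitimate because Fourier support in the $\delta$-neighbourhood implies support in the $Ml^2$-neighbourhood --- so that each canonical cap of length $l$ contains only $O(1)$ intervals of that class (Proposition \ref{tiling}), and the final passage is triangle inequality plus Cauchy--Schwarz in the harmless direction. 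You would need to supply this, or an equivalent mechanism, to close the argument. A secondary point, which you already flag as the expected obstacle: a single affine rescaling does not reduce a general $C^3$ curve with $\phi''\sim 1$ to the parabola, so flat Bourgain--Demeter does not apply directly on a good piece; one needs the Pramanik--Seeger-type iteration over intermediate scales $\delta\to\delta^{2/3}\to\cdots$, approximating by osculating parabolas, which is the content of Lemma \ref{nonzero}.
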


\begin{lem}\label{lemboot}
If $d\geq 3$, then there is some $C_{d-2}$ such that any polynomial $\phi$ of degree at most $d$ belongs to $\mathcal D_{d-2}=\mathcal D_{d-2}(C_{d-2})$.
\end{lem}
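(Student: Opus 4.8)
The plan is to translate the assertion ``$\phi\in\mathcal D_{d-2}$'' into a classical sublevel-set estimate for polynomials. Write $m:=d-2\ge1$; for a polynomial $\phi$ of degree at most $d$ (which is certainly $C^3$) put $\psi:=\phi''$, a polynomial of degree at most $m$. Fix an interval $J\sub[0,1]$ and $0<\Gs<1$, and set $\mathcal M:=\sup_J|\phi''|+|J|\sup_J|\phi'''|=\sup_J|\psi|+|J|\sup_J|\psi'|$, so that the set $B=B(\phi,J)$ of Definition~\ref{polylike} is precisely $\{s\in J:|\psi(s)|<\Gs\mathcal M\}$. First I would dispose of the degenerate case: if $\psi$ is constant (in particular if $\psi\equiv0$), then $\Gs\mathcal M=\Gs\sup_J|\psi|<\sup_J|\psi|$, so $B=\emptyset$ and nothing is to be proved. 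Otherwise $\psi$ is non-constant, of some degree $k\le m$, and the bound on the number of components is immediate: $B=\{s\in J:\psi^2-(\Gs\mathcal M)^2<0\}$ is the negativity set of a polynomial of degree $2k\le2m$, hence a union of at most $2m+1$ intervals relatively open in $J$. Thus the whole content of the lemma is the measure bound $|B|\le C_{d-2}\,\Gs^{1/m}\,|J|$.

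The next step is to reduce this to a bound for a polynomial normalised by its sup-norm on $J$. By Markov's inequality applied to $\psi$ on $J$, $|J|\sup_J|\psi'|\le2k^2\sup_J|\psi|\le2m^2\sup_J|\psi|$, so $\mathcal M\le(1+2m^2)\sup_J|\psi|$ and therefore
\[
B\ \subseteq\ \{s\in J:|\psi(s)|\le\eta\sup_J|\psi|\},\qquad \eta:=\frac{\Gs\mathcal M}{\sup_J|\psi|}\ \le\ (1+2m^2)\,\Gs .
\]
The decisive ingredient, which I would isolate and prove as a preliminary lemma, is a Chebyshev/Remez-type sublevel estimate: there is a constant $A_m$, depending only on $m$, such that for every polynomial $P$ of degree at most $m$, every interval $I$ and every $0<\eta<1$,
\[
\bigl|\{x\in I:|P(x)|\le\eta\sup_I|P|\}\bigr|\ \le\ A_m\,\eta^{1/m}\,|I| .
\]

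To prove this I would normalise $I=[-1,1]$ and $\sup_I|P|=1$, note that $\{|P|\le\eta\}$ is a union of at most $2m+1$ closed subintervals $[a,b]$ with $\sup_{[a,b]}|P|\le\eta$, and for each of them consider the affine map $\tau$ with $\tau([a,b])=[-1,1]$: then $Q:=P\circ\tau^{-1}$ has sup-norm $\le\eta$ on $[-1,1]$, while $\tau([-1,1])$ is an interval of length $4/(b-a)$ containing $[-1,1]$, so the extremal property of Chebyshev polynomials ($|Q|\le|T_m|$ off $[-1,1]$ when $\deg Q\le m$ and $\sup_{[-1,1]}|Q|\le1$, applied to $Q/\eta$), together with the crude bound $T_m(t)\le(2t)^m$ for $t\ge1$, forces $1=\sup_{[-1,1]}|P|\le\eta\,\bigl(8/(b-a)\bigr)^m$, i.e.\ $b-a\le8\,\eta^{1/m}$. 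Summing over the at most $2m+1$ subintervals gives the estimate.

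Finally I would put the pieces together: feeding $\eta\le(1+2m^2)\Gs$ into the sublevel estimate (legitimate directly once $\eta<1$, which the final choice of $C_{d-2}$ guarantees whenever $\Gs<C_{d-2}^{-(d-2)}$, and trivial otherwise) and using $(1+2m^2)^{1/m}\le1+2m^2$ gives $|B|\le A_m(1+2m^2)\,\Gs^{1/m}\,|J|$; since $1/m=1/(d-2)$, the lemma follows on taking $C_{d-2}:=\max\{2m+1,\ A_m(1+2m^2)\}$, which is $>1$. I expect the only genuine difficulty to be the sublevel-set lemma. In particular, the more elementary bound $\bigl|\{|P|\le\Gl\}\bigr|\lesssim_m(\Gl/|a_m|)^{1/m}$ in terms of the leading coefficient $a_m$ is \emph{not} enough here, since $a_m$ can be made arbitrarily small by a perturbation that barely changes $\sup_I|P|$ (think of a polynomial very close to the constant $1$ with nonzero top coefficient); one is forced to normalise by $\sup_I|P|$, which is precisely what the Chebyshev extremal argument supplies. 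The secondary point to watch is the term $|J|\sup_J|\psi'|$ hidden in $\mathcal M$, which must be absorbed using Markov's inequality so as not to spoil the proportionality $\eta\lesssim_m\Gs$.
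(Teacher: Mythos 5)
Your proof is correct and follows essentially the same route as the paper's: the component count comes from the fundamental theorem of algebra applied to $(\phi'')^2-(\Gs\mathcal M)^2$, the $|J|\sup_J|\phi'''|$ term is absorbed via Markov's inequality, and the measure bound reduces to a sublevel-set estimate for polynomials of degree at most $d-2$. The only difference is that the paper imports that sublevel estimate from Stein--Wainger (Proposition 2.2 of \cite{MR1879821}, normalised by the coefficient sum, which Markov's inequality makes comparable to the sup-norm), whereas you normalise by $\sup_I|P|$ directly and prove the estimate from scratch via the Chebyshev extremal growth bound --- a correct, self-contained substitute.
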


The family $\mathcal D_d$ contains functions other than polynomials of degree at most $d+2$.
For example, with a suitable choice of $C_d$, all functions $\phi(s)=s^\gamma$ where $\gamma\in [3,d+2]$ are in $\mathcal D_d(C_d)$. Also, all functions of the form $\sin (\Go s+\Go_0)$ and $\cos(\Go s+\Go_0)$ where $|\Go|\leq 1$ are contained in $\mathcal D_1(C_1)$ for some suitable $C_1$. Finally, it is easily observed that if $\phi\in \mathcal D_d$, then for all $a,b,c\in \R$, the phase $s\mapsto c\phi(s)+as+b\in \mathcal D_d$.

{\it Remark.} \begin{enumerate}
    \item Theorem \ref{uniform} immediately leads to \eqref{s^3}.
    \item Theorem \ref{uniform} works for all sub-admissible partitions, although it is the sharpest for admissible partitions. In particular, this shows that the decoupling inequality for the unit parabola is true if we replace the canonical partition $[j\Gd^{1/2},(j+1)\Gd^{1/2}]$ by any coarser partition (without any assumption on the structure of the partition).
\end{enumerate}
{\it Outline of the paper.} In Section \ref{secproofmain} we give a proof of Theorem \ref{uniform} assuming a key bootstrap inequality \eqref{mainboot}, and we also prove Lemma \ref{lemboot}. In Section \ref{sectionadmis} we explore more properties of admissible partitions which will be needed subsequently. In Section \ref{secnonzerocurve} we study decoupling inequalities for curves with nonvanishing curvature, which contains a uniform decoupling result for all quadratic polynomials. In Section \ref{secpolyres} we discuss polynomial rescaling, which will in turn be used in Section \ref{bootproof} to prove the bootstrap inequality \eqref{mainboot}. In the appendix, we show that our result implies Proposition 2.1 of \cite{MR4078083}.

{\it Notation.}
\begin{enumerate}
    \item As always, the notation $A\lesssim_{\Ga,\Gb}B$ means that there is a constant $C$, depending on $\Ga$ and $\Gb$, such that $A\leq CB$.
    \item We write $e(z)=e^{2\pi i z}$ for all $z\in \C$. This is the only place that $i$ appears as the imaginary unit in this paper, so that we can save this notation as an index. 
\end{enumerate}

{\it Acknowledgement.} I sincerely thank my advisor Malabika Pramanik for her kind guidance in the preparation of this paper. I also thank Zane Li for useful discussions, especially about the technical aspects of the world of decoupling.

\section{Proof of main theorem}\label{secproofmain}
The proof of Theorem \ref{uniform} is by induction on scales. We need a little more terminology.

For $d\geq 1$, we fix a choice of $C_d$ which is large enough. Let $\mathcal A_d=\mathcal A_d(C_d)$ be the subcollection of $\phi\in\mathcal D_d=\mathcal D_d(C_d)$ with $\norm {\phi''}_\infty$ bounded above by $1$.

\begin{defn}[Uniform decoupling for functions in $\mathcal A_d$]\label{defdecAd}
Let $1<p<\infty$. Let $D^d_p(\Gd)$, $\Gd>0$ be the best constant such that for each $\phi\in \mathcal A_d$, each sub-admissible partition $\mathcal P$ of $[0,1]$ for $\phi$ at scale $\Gd$, each function $f\in L^p(\R^2)$ with Fourier support in $\mathcal N^\phi_{[0,1],\Gd}$, we have
$$
\norm {f}_{L^p(\R^2)}\leq D^d_p(\Gd)\left(\sum_{I\in\mathcal P}\norm {f_I }^2_{L^p(\R^2)}\right)^{\frac 1 2}.
$$
\end{defn}
{\it Remark.} We have $1\leq D^d_p(\Gd)<\infty$, where the finiteness follows from Proposition \ref{number} in Section \ref{sectionadmis}, the fact that $\sup_{\phi\in \mathcal A_d}\norm {\phi''}_\infty\leq 1$ and an application of triangle inequality and Cauchy-Schwarz. Also, $D^d_p(\Gd)$ implicitly depends on the choice of absolute constant $C_d$.

The key bootstrap inequality we shall use the following.
\begin{thm}\label{mainbootthm}
Let $2\leq p\leq 6$, $\Ge>0$, $d\geq 1$. If $C_d$ is large enough, then for any $M>C_d$, there is a constant $C_{\Ge,M}=C(p,d,\Ge,M)$ and a constant $K=K(d)$, such that for each $0<\Gd\leq 1$ we have
\begin{equation}\label{mainboot}
    D^d_p(\Gd)\leq K(C_{\Ge,M}\Gd^{-\Ge}+ \sup_{\Gd'\geq M\Gd}D^d_p(\Gd')).
\end{equation}
\end{thm}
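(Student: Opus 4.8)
The plan is to prove \eqref{mainboot} by a multi-scale decomposition that exploits the structure of an admissible partition combined with the theory of decoupling for curves of nonvanishing curvature. Fix $\phi\in\mathcal A_d$, a scale $0<\Gd\le 1$, and a sub-admissible partition $\mathcal P$ of $[0,1]$ for $\phi$ at scale $\Gd$, together with $f$ Fourier supported on $\mathcal N^\phi_{[0,1],\Gd}$. The key dichotomy is the following. On intervals $J$ where $|\phi''|$ is comparable to $\sup_J|\phi''|+|J|\sup_J|\phi'''|$ (the ``good'' regime, away from the vanishing set $B(\phi,J)$ of Definition \ref{polylike}), the curve genuinely has nonvanishing curvature at the relevant scale, and after an affine/parabolic rescaling we may invoke the uniform decoupling for curves of nonvanishing curvature proved in Section \ref{secnonzerocurve}, which will absorb into the $C_{\Ge,M}\Gd^{-\Ge}$ term. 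On the ``bad'' set $B$, which by the $\mathcal D_d$ hypothesis is a union of at most $C_d$ intervals of total length $\le C_d\Gs^{1/d}|J|$, the curve is flatter and one rescales \emph{each} bad subinterval to $[0,1]$; this is where $\phi''$ is rescaled, the new scale becomes $\Gd'\ge M\Gd$ (by choosing $\Gs$ so that the rescaling expands the $\Gd$-neighbourhood by a factor $\ge M$), and we pick up a factor of $D^d_p(\Gd')\le\sup_{\Gd'\ge M\Gd}D^d_p(\Gd')$. Summing the at most $C_d$ bad pieces with Cauchy--Schwarz (at a cost absorbed into the constant $K=K(d)$) together with flat-decoupling / the nonvanishing-curvature piece on the complement yields \eqref{mainboot}.

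The steps, in order, are as follows. \textbf{Step 1.} Choose the threshold $\Gs=\Gs(M,d)$ small (smaller than $C_d^{-d}$, using that $C_d$ is large) so that: (a) the expansion factor in the rescaling of a bad interval is at least $M$, forcing the rescaled scale $\Gd'$ to satisfy $\Gd'\ge M\Gd$; (b) on the complement of $B(\phi,[0,1])$ the quantity $|\phi''|$ is bounded below by $\Gs\,\norm{\phi''}_\infty$-type quantities, so that curvature is genuinely nonvanishing there at scale $\Gd$. \textbf{Step 2.} Decompose $[0,1]=B\cup([0,1]\setminus B)$ with $B=B(\phi,[0,1])$; by triangle inequality split $f=f_B+f_{[0,1]\setminus B}$ up to the bounded Fourier multiplier cost (the Remark after Definition \ref{fourierres}). \textbf{Step 3 (good part).} Cover $[0,1]\setminus B$ by $O_d(1)$ intervals on each of which $|\phi''|\sim$ const; on each, rescale to normalize curvature and apply the uniform nonvanishing-curvature decoupling from Section \ref{secnonzerocurve} down to scale $\Gd$, at cost $C_{\Ge,M}\Gd^{-\Ge}$; the partition $\mathcal P$ restricted here remains (sub-)admissible under the rescaling by the properties of admissible partitions in Section \ref{sectionadmis}. \textbf{Step 4 (bad part).} Write $B$ as a disjoint union of at most $C_d$ intervals $B_k$; on each $B_k$ apply the affine rescaling $T_k$ taking $B_k$ to $[0,1]$. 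The rescaled phase $\phi\circ T_k$ (suitably normalized by an affine function in the $t$-variable, using that $\mathcal D_d$ is closed under such modifications) lies in $\mathcal A_d$, the rescaled neighbourhood is $\mathcal N^{\phi\circ T_k}_{[0,1],\Gd'}$ with $\Gd'\ge M\Gd$, and the image of $\mathcal P\cap B_k$ is a sub-admissible partition for $\phi\circ T_k$ at scale $\Gd'$ (again by Section \ref{sectionadmis}); hence that piece is bounded by $D^d_p(\Gd')\,(\sum\norm{f_I}_p^2)^{1/2}$. \textbf{Step 5.} Combine Steps 3--4 via the triangle inequality and a single Cauchy--Schwarz in the $\le C_d+O_d(1)$ pieces, collecting the combinatorial loss into $K=K(d)$, to obtain \eqref{mainboot}.

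The main obstacle I anticipate is \textbf{Step 4}, specifically verifying that the affine rescaling of a bad subinterval behaves well on \emph{all} the relevant structures simultaneously: (i) that $\phi\circ T_k$, after subtracting an affine function, still lies in $\mathcal A_d$ with $\norm{(\phi\circ T_k)''}_\infty\le 1$ --- this needs the length bound $|B|\le C_d\Gs^{1/d}|J|$ to control how much the second derivative is amplified, and it is exactly here that the exponent $1/d$ in Definition \ref{polylike} is used to keep the new scale $\Gd'\ge M\Gd$ rather than merely $\Gd'>\Gd$; and (ii) that sub-admissibility of the partition is preserved (or only improved) under rescaling, which should follow from \eqref{Taylor} and the invariance properties of property $P(r)$, but requires care because rescaling changes both $|I|^2$ and $\sup_I|\phi''|$. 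A secondary subtlety is ensuring the bad set $B$ at scale $[0,1]$ interacts correctly with $\mathcal P$: intervals of $\mathcal P$ straddling the boundary $\partial B$ must be handled, which I expect to absorb into the $O_d(1)$ count since $B$ has at most $C_d$ components. The nonvanishing-curvature input in Step 3 I treat as a black box from Section \ref{secnonzerocurve} built on \cite{MR3374964}.
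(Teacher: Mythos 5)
Your proposal follows essentially the same route as the paper: the same decomposition of $[0,1]$ into the bad set $B(\phi,[0,1])$ (with $\Gs=C_d^{-d}M^{-d}$ so that $|B|\leq C_d\Gs^{1/d}=M^{-1}$ and each bad component rescales to a scale $\geq M\Gd$, producing the $\sup_{\Gd'\geq M\Gd}D^d_p(\Gd')$ term) and its complement (where the rescaled phase satisfies the nonvanishing-curvature hypotheses of Theorem \ref{quaddom}, producing the $C_{\Ge,M}\Gd^{-\Ge}$ term), with straddling intervals of $\mathcal P$ absorbed into $K(d)$. Your anticipated obstacle (i) is in fact a non-issue: the rescaling of Theorem \ref{cubicres} gives $\psi''(s')=(\Gb-\Ga)\phi''(\Ga+(\Gb-\Ga)s')$, so the second derivative is contracted, not amplified, and $\psi\in\mathcal A_d$ follows directly.
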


With this, we prove
\begin{thm}\label{familyA}
Let $2\leq p\leq 6$, $\Ge>0$, $d\geq 1$. Then there is $C_\Ge=C_{\Ge,p,d}$ such that for all $0<\Gd\le 1$, we have
$$
D^d_p(\Gd)\leq C_\Ge \Gd^{-\Ge}.
$$
\end{thm}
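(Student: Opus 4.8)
The plan is to derive Theorem \ref{familyA} from the bootstrap inequality \eqref{mainboot} by a standard iteration-on-scales argument. Fix $2\le p\le 6$, $d\ge 1$, and a target exponent $\Ge>0$. Apply Theorem \ref{mainbootthm} with the half-size exponent $\Ge/2$ in place of $\Ge$: there are constants $M>C_d$ (which we now fix), $K=K(d)$, and $C_{\Ge/2,M}=C(p,d,\Ge/2,M)$ such that for every $0<\Gd\le 1$,
\begin{equation}\label{bootapplied}
D^d_p(\Gd)\le K\bigl(C_{\Ge/2,M}\Gd^{-\Ge/2}+\sup_{\Gd'\ge M\Gd}D^d_p(\Gd')\bigr).
\end{equation}
The idea is that each application of \eqref{bootapplied} trades the unknown quantity at scale $\Gd$ for the unknown quantity at scale $M\Gd$, which is strictly coarser by a fixed multiplicative factor $M>1$; after $O(\log(1/\Gd))$ iterations one lands in the regime $\Gd'\gtrsim 1$, where $D^d_p(\Gd')$ is controlled by a single absolute constant.

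First I would record the base case. When $\Gd'\ge M^{-1}$ (say), the number of intervals in any sub-admissible partition is bounded by an absolute constant $N_0=N_0(p,d)$: this follows from the remark after Definition \ref{defdecAd}, namely from Proposition \ref{number}, the bound $\norm{\phi''}_\infty\le 1$ on $\mathcal A_d$, and triangle inequality plus Cauchy--Schwarz; hence $D^d_p(\Gd')\le C_p\sqrt{N_0}=:A_0$ for all $\Gd'$ in a bounded range away from $0$. (Here $C_p$ is the $L^p$-operator norm of the Fourier truncation from the remark after Definition \ref{fourierres}.) More precisely, there is $\Gd_0\in(0,1)$ and a constant $A_0=A_0(p,d)$ with $D^d_p(\Gd')\le A_0\Gd'^{-\Ge/2}$ for all $\Gd'\in[\Gd_0,1]$, trivially, since $\Gd'^{-\Ge/2}\ge 1$ there.

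Next I would run the induction. Set $B:=\max\{2KC_{\Ge/2,M},\ A_0 M^{\Ge/2}\}$ and claim that $D^d_p(\Gd)\le B\,\Gd^{-\Ge/2}\cdot(2K)^{n(\Gd)}$, where $n(\Gd)$ is the number of steps needed to reach the base regime — but this naive bound is too lossy, since $(2K)^{n(\Gd)}$ is polynomial in $1/\Gd$. The correct move is the usual one: since $M>1$, choose the final target $\Ge$ so that the geometric loss is absorbed. Concretely, I claim $D^d_p(\Gd)\le C_\Ge\Gd^{-\Ge}$ with $C_\Ge$ chosen so that $D^d_p(\Gd)\le C_\Ge\Gd^{-\Ge}$ holds on $[\Gd_0,1]$ and so that the inductive step closes: assuming the bound holds for all scales $\ge M\Gd$, \eqref{bootapplied} gives
\[
D^d_p(\Gd)\le K C_{\Ge/2,M}\Gd^{-\Ge/2}+K C_\Ge (M\Gd)^{-\Ge}=K C_{\Ge/2,M}\Gd^{-\Ge/2}+K M^{-\Ge}C_\Ge\,\Gd^{-\Ge}.
\]
Now pick $\Ge$'s companion constant: since $M>C_d>1$ is fixed, we may \emph{first} shrink nothing but instead note $KM^{-\Ge}<1/2$ provided $\Ge>\log(2K)/\log M$; if the given $\Ge$ is smaller than this threshold, simply prove the statement for the larger exponent $\Ge':=\max\{\Ge,2\log(2K)/\log M\}$, which implies it for $\Ge$ since $\Gd^{-\Ge}\le\Gd^{-\Ge'}$. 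With $KM^{-\Ge}\le 1/2$ in force, the displayed inequality yields
\[
D^d_p(\Gd)\le K C_{\Ge/2,M}\Gd^{-\Ge/2}+\tfrac12 C_\Ge\Gd^{-\Ge}\le \bigl(K C_{\Ge/2,M}+\tfrac12 C_\Ge\bigr)\Gd^{-\Ge},
\]
using $\Gd^{-\Ge/2}\le\Gd^{-\Ge}$. Choosing $C_\Ge\ge 2KC_{\Ge/2,M}$ (and also $\ge A_0$ to cover the base range) makes the right side $\le C_\Ge\Gd^{-\Ge}$, closing the induction. Since every scale $\Gd\in(0,\Gd_0)$ reaches $[\Gd_0,1]$ after finitely many multiplications by $M$, this proves $D^d_p(\Gd)\le C_\Ge\Gd^{-\Ge}$ for all $0<\Gd\le 1$.

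I do not expect a genuine obstacle here: the content of the theorem is entirely in the bootstrap inequality \eqref{mainboot}, and this final step is the routine ``$\Ge$-removal by iteration'' that is ubiquitous in decoupling. The only points requiring a word of care are (i) that the supremum $\sup_{\Gd'\ge M\Gd}D^d_p(\Gd')$ in \eqref{mainboot} is over a set on which the inductive hypothesis is available — which is fine because every such $\Gd'$ is strictly larger than $\Gd$ and the induction can be phrased as a continuous induction or, after discretising scales to powers of $M$, as an ordinary finite induction; (ii) that $D^d_p(\Gd)$ is a priori finite for each fixed $\Gd$, which is exactly the remark following Definition \ref{defdecAd}; and (iii) ensuring $\Ge$ exceeds the threshold $\log(2K)/\log M$, handled by monotonicity of $\Gd^{-\Ge}$ in $\Ge$ as above.
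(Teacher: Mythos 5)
Your overall strategy (iterate the bootstrap inequality down to a base scale where the partition is trivial, absorbing the multiplicative loss $K$ per step) is the same as the paper's, and most of the bookkeeping is fine, but there is a genuine logical error at the one point where the argument has real content. To close your induction you need $KM^{-\Ge}\le 1/2$, i.e.\ $\Ge\ge\log(2K)/\log M$, and you propose to handle smaller $\Ge$ by ``proving the statement for the larger exponent $\Ge'$, which implies it for $\Ge$ since $\Gd^{-\Ge}\le\Gd^{-\Ge'}$.'' That implication runs backwards: $D^d_p(\Gd)\le C\Gd^{-\Ge'}$ is a \emph{weaker} assertion than $D^d_p(\Gd)\le C\Gd^{-\Ge}$ when $\Ge'>\Ge$, so establishing the theorem for exponents above a fixed threshold yields nothing for small exponents, which are the entire point of the theorem. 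As written, your argument proves the bound only for $\Ge\ge 2\log(2K)/\log M$ with $M$ fixed once and for all.

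The repair is to exploit the freedom built into Theorem \ref{mainbootthm}: the bootstrap inequality holds for \emph{every} $M>C_d$, with $C_{\Ge,M}$ allowed to depend on $M$ while $K=K(d)$ does not grow with $M$. So $M$ must be chosen \emph{depending on $\Ge$}, e.g.\ $M=\max\{(2K)^{1/\Ge},\,2C_d\}$, which forces $KM^{-\Ge}\le 1/2$ for the given $\Ge$ and lets your induction close exactly as you wrote it. This is precisely what the paper does: it iterates to obtain $D^d_p(\Gd)\le(3K)^nC_{\Ge,M}\Gd^{-\Ge}$ with $n\approx\log(\Gd^{-1})/\log M$, so the accumulated factor is $(\Gd^{-1})^{\log(3K)/\log M}$, and then sets $M=(3K)^{1/\Ge}$ to make that factor equal to $\Gd^{-\Ge}$ (landing on $\Gd^{-2\Ge}$, which suffices after relabelling $\Ge$). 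Your ancillary points --- a priori finiteness of $D^d_p(\Gd)$, the base case at scales bounded away from $0$, and discretising the scales to make the downward induction well-founded --- are all handled correctly and match the paper.
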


\begin{proof} 
Let $M>C_d$ be determined at the end of the proof. Denote $S(r):=\sup_{\Gd'\geq r}D^d_p(\Gd')$. Then \eqref{mainboot} says that
\begin{equation}\label{mainboot2}
  D^d_p(\Gd)\leq K(C_{\Ge,M} \Gd^{-\Ge}+S(M\Gd)).  
\end{equation}
For any $\Gd'\geq M\Gd$ but $\Gd'\leq 1$, we may apply \eqref{mainboot} again to get
$$
D^d_p(\Gd')\leq K(C_{\Ge,M} \Gd'^{-\Ge}+S(M\Gd')).
$$
Taking supremum over $\Gd'\geq M\Gd$ to the above equation, we have
$$
S(M\Gd)\leq K(C_{\Ge,M} M^{-\Ge}\Gd^{-\Ge}+S(M^2\Gd))\leq K(C_{\Ge,M}\Gd^{-\Ge}+S(M^2\Gd)).
$$
Plugging into \eqref{mainboot2}, we have
$$
D^d_p(\Gd)\leq KC_{\Ge,M} \Gd^{-\Ge}+K^2 C_{\Ge,M} \Gd^{-\Ge}+K^2S(M^2\Gd).
$$
In general, for each $n\geq 0$, if $M^n\Gd\leq 1$, then
\begin{align*}
    D^d_p(\Gd)
    &\leq K^n S(M^n \Gd)+\sum_{m=1}^n C_{\Ge,M} \Gd^{-\Ge}K^m\leq K^n S(M^n \Gd)+nK^nC_{\Ge,M} \Gd^{-\Ge}.
\end{align*}
We choose $n\geq 0$ to be the smallest integer such that $M^n \Gd\geq 1$, and thus $n<\frac{\log(\Gd^{-1})}{\log M}+1$.

We also have $S(M^n \Gd)=1$. Indeed, if $\Gd'\geq M^n\Gd\geq 1$, then all sub-admissible partitions of $[0,1]$ for $\phi$ at scale $\Gd'$ must be the trivial partition, in view of \eqref{Taylor} and the assumption that $\norm {\phi''}_\infty \leq 1$.

Thus
\begin{align*}
     K^n S(M^n \Gd)+nK^nC_{\Ge,M} \Gd^{-\Ge}
    \leq K^n +(2K)^nC_{\Ge,M} \Gd^{-\Ge}
    \leq (3K)^n C_{\Ge,M} \Gd^{-\Ge}.
\end{align*}
and thus
\begin{align*}
   D^d_p(\Gd)
   &\leq (3K)^{\frac {\log (\Gd^{-1})}{\log M}+1} C_{\Ge,M}\Gd^{-\Ge}\\
   &=(\Gd^{-1})^{\frac {\log (3K)}{\log M}}3KC_{\Ge,M}\Gd^{-\Ge}.
\end{align*}
Now we may choose $M=(3K)^{1/\Ge}>C_d$, so $M$ depends on $d,\Ge$ only. If we choose $C_\Ge=3K C_{\Ge,M}$, then
$$
D^d_p(\Gd)\leq C_\Ge \Gd^{-2\Ge},
$$
which finishes the proof.
\end{proof}

We can finally give a proof of Theorem \ref{uniform}, using a simple rescaling argument that will also be used extensively onwards. Given an arbitrary $\phi\in \mathcal D_d$. Put $\psi:=\norm{\phi''}_\infty^{-1}\phi\in \mathcal A_d$. If $\mathcal P$ is sub-admissible for $\phi$ at scale $\Gd$, then it is sub-admissible for $\psi$ at scale $\norm{\phi''}_\infty^{-1}\Gd$. 

Given $f\in L^p(\R^2)$ with Fourier support in $\mathcal N^\phi_{[0,1],\Gd}$. Define $\hat g(s,t)=\hat f(s,\norm{\phi''}_\infty^{-1}t)$. Then $g\in L^p(\R^2)$ with Fourier support in $\mathcal N^\psi_{[0,1],\norm{\phi''}_\infty^{-1}\Gd}$. Applying Theorem \ref{familyA} to $g$ at scale $\norm{\phi''}_\infty^{-1}\Gd$ to get
$$
\norm {g}_{L^p(\R^2)}\leq  C_\Ge \norm{\phi''}_\infty^{\Ge}\Gd^{-\Ge}\left(\sum_{I\in\mathcal P}\norm {g_I }^2_{L^p(\R^2)}\right)^{\frac 1 2}.
$$
Rescaling back, we obtain the desired conclusion for $f$.

\subsection{Proof we Lemma \ref{lemboot}}

\begin{proof}
Since a rescaled polynomial of degree at most $d$ is also a polynomial of degree at most $d$, it suffices to consider $J=[0,1]$. Also, without loss of generality we may assume $\sup_{s\in [0,1]}|\phi''(s)|=1$. Then the sum $\Gl$ of the absolute values of all coefficients of $\phi''$ is bounded by a constant depending on $d$ only (this is a consequence of an inequality of A. and M. Markov (see, for instance, \cite{Shadrin}).

By the fundamental theorem of algebra, the set $B(\phi,[0,1])$ is a union of at most $O(d)$ intervals that are relatively open in $[0,1]$. By Proposition 2.2 of \cite{MR1879821} in the case $n=1$ applied to $\phi''$, each such interval has length 
$$
\lesssim_d \Gs^{\frac 1{d-2}} \Gl^{-\frac 1{d-2}}\sim_d \Gs^{\frac 1{d-2}},
$$
from which the result follows if $C_d$ is large enough.
\end{proof}

\section{More on admissible partitions}\label{sectionadmis}
In this section we discuss some further properties of admissible partitions that will be used later.

We start with a simple observation. Let $I_0$ be a compact interval. If $\phi$ is linear on $I_0$, then any partition is super-admissible for $\phi$ at any scale $r>0$, while the trivial partition is the only sub-admissible and thus the only admissible partition for $\phi$ at scale $r$.

The sharpest case of Theorem \ref{uniform} is when $\mathcal P$ is actually admissible (i.e. both super-admissible and sub-admissible). The following proposition says that every $C^2$ phase function on $[0,1]$ admits an admissible partition at any scale $\Gd>0$, so each phase function has a sharp decoupling inequality in terms of the partition.
\begin{prop}\label{exist}
Let $I_0$ be compact and $\phi:I_0\to \R$ be $C^2$. Then for any $r>0$, there exists an admissible partition $\mathcal P$ of $I_0$ for $\phi$ at scale $r>0$.
\end{prop}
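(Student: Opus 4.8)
The plan is to construct $\mathcal P$ by a greedy algorithm sweeping from the left endpoint of $I_0 = [a,b]$, choosing each interval to be as long as possible subject to property $P(r)$, and then to check that this terminates in finitely many steps and produces a partition that is simultaneously super-admissible and sub-admissible. Set $a_0 = a$. Given $a_j < b$, define
$$
a_{j+1} = \sup\{ s \in (a_j, b] : [a_j, s] \text{ satisfies } P(r) \}.
$$
First I would record that this supremum is attained, i.e. $[a_j, a_{j+1}]$ itself satisfies $P(r)$: the function $s \mapsto \sup_{u \in [a_j,s]} |\phi''(u)| \, (s-a_j)^2$ is continuous and nondecreasing in $s$ (here continuity uses that $\phi'' $ is continuous on the compact $I_0$), so by \eqref{Taylor} the set of admissible $s$ is a closed subinterval containing $a_j$, hence of the form $[a_j, a_{j+1}]$. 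In particular each step makes strictly positive progress, $a_{j+1} > a_j$, because near $s = a_j$ the quantity in \eqref{Taylor} is smaller than $4r$ (it vanishes at $s=a_j$ and is continuous), unless we have already reached $b$.

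Next I would argue the algorithm stops, i.e. $a_N = b$ for some finite $N$. This is the main point requiring care. If $a_j \to a_\infty \le b$ without ever reaching $b$, then for each $j$ the interval $[a_j, a_{j+1}]$ is \emph{maximal}, so $[a_j, a_{j+1} + \eta]$ fails $P(r)$ for every small $\eta > 0$; letting $\eta \to 0$ and using continuity of the left side of \eqref{Taylor} gives $\sup_{u \in [a_j, a_{j+1}]} |\phi''(u)|\,(a_{j+1}-a_j)^2 \ge 4r$ (with equality, in fact). Hence $\sup_{u \in [a_j,a_{j+1}]} |\phi''(u)| \ge 4r (a_{j+1}-a_j)^{-2} \to \infty$ as $j \to \infty$ since $a_{j+1}-a_j \to 0$. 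But $\phi''$ is continuous on the compact interval $I_0$, hence bounded, a contradiction. Therefore the process terminates: $a_N = b$, and $\mathcal P = \{[a_{j-1},a_j]\}_{j=1}^N$ is a finite partition of $I_0$.

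Finally I would verify the two required properties. Super-admissibility is immediate: each $I = [a_{j-1},a_j] \in \mathcal P$ satisfies $P(r)$ by construction. For sub-admissibility, take adjacent intervals $I = [a_{j-1},a_j]$ and $J = [a_j, a_{j+1}]$; their union is $[a_{j-1}, a_{j+1}]$ with $a_{j+1} > a_j$. By maximality of the $j$-th step, $[a_{j-1}, s]$ fails $P(r)$ for every $s > a_j$, in particular for $s = a_{j+1}$, so $I \cup J$ does not satisfy $P(r)$, as required. (One should also handle the degenerate case $a_1 = b$: then $\mathcal P = \{I_0\}$ is the trivial partition, which is super-admissible trivially and vacuously sub-admissible since there are no adjacent pairs.) The expected main obstacle is the termination argument, specifically extracting the lower bound $\sup_{[a_j,a_{j+1}]}|\phi''| \gtrsim r(a_{j+1}-a_j)^{-2}$ from maximality and combining it cleanly with the uniform bound on $\|\phi''\|_\infty$; everything else is bookkeeping.
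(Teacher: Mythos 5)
Your proposal is correct and follows essentially the same route as the paper's proof: a greedy left-to-right construction taking each interval maximal subject to $P(r)$, with termination coming from the lower bound $a_{j+1}-a_j\geq 2\sqrt{r/\norm{\phi''}_\infty}$ that maximality and Taylor's theorem force on each step. The only cosmetic difference is that you package this bound as a contradiction argument (step lengths cannot tend to zero) while the paper states it directly as a uniform lower bound; the content is identical.
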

\begin{proof}
If $\phi$ is linear, then the trivial partition is admissible. Otherwise, $\norm {\phi''}_\infty>0$ and we will construct an admissible partition $\mathcal P$ of $I_0$ for $\phi$ at scale $r$.

Denote $I_0=[\Ga,\Gb]$ and let $a_0=\Ga$. Let 
$$
a_1:=\max\{t\in [a_0,\Gb]:\max_{s,c\in [a_0,t]}|\phi(s)-\phi(c)+\phi'(c)(s-c)|\leq 2r\}.
$$
Such $a_1$ always exists in $[a_0,\Gb]$ since $\phi$ is $C^2$. If $a_1=\Gb$, then we arrive at the trivial partition which is admissible for $\phi$ at scale $r$.

If $a_1<\Gb$, then there are $s,c\in [a_0,a_1]$, either $s$ or $c$ being $a_1$, such that
$$
|\phi(s)-\phi(c)+\phi'(c)(s-c)|= 2r.
$$
Taylor's theorem implies that 
$$
\norm {\phi''}_\infty (s-c)^2\geq 4r,
$$
so 
$$
a_1-a_0\geq |s-c|\geq \frac {2r^{1/2}}{\norm {\phi''}^{1/2}_\infty}.
$$
Let
$$
a_2:=\max\{t\in [a_1,\Gb]:\max_{s,c\in [a_1,t]}|\phi(s)-\phi(c)+\phi'(c)(s-c)|\leq 2r\}.
$$
If $a_2=\Gb$, then $\mathcal P:=\{[a_0,a_1],[a_1,a_2]\}$ is an admissible partition of $[\Ga,\Gb]$ for $\phi$ at scale $r$. Otherwise, by the same analysis above, we have $a_2-a_1\geq 2\sqrt{r/\norm {\phi''}_\infty}$.
Then define $a_3$ in the above fashion, and repeat.

This process must stop at finite time since $a_{n}-a_{n-1}\geq 2\sqrt{r/\norm {\phi''}_\infty}$ for all $n\geq 1$. 
\end{proof}

The next proposition is about the lengths of intervals constituting a sub-admissible partition for certain phase functions.
\begin{prop}\label{number}
Let $\phi:I_0\to \R$ be $C^2$. Let $\Gd>0$ and suppose $\mathcal P$ is a sub-admissible partition of $I_0$ for $\phi$ at scale $\Gd$. Then there is a partition $\mathcal P'$ of $I_0$, such that each interval $I\in \mathcal P'$, except possibly the last one, is a union of two adjacent intervals in $\mathcal P$ and has length bounded below by
    $$
    2\sqrt{\frac {\Gd}{\norm {\phi''}_\infty}}.
    $$
As a result, the number of intervals in $\mathcal P$ is bounded above by $\Gd^{-1/2}{\norm {\phi''}^{1/2}_\infty}+1$.
\end{prop}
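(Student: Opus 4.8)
The plan is a short pigeonhole argument built on pairing up consecutive intervals of $\mathcal P$. First I would dispose of the affine case: if $\phi''\equiv 0$ then $\norm{\phi''}_\infty=0$, the only sub-admissible partition of $I_0$ is the trivial one $\{I_0\}$ (as noted at the start of this section), and taking $\mathcal P':=\{I_0\}$ — whose single element is ``the last one'' — makes both assertions vacuous. So I may assume $\norm{\phi''}_\infty>0$; since $\phi\in C^2$ and $I_0$ is compact, $\norm{\phi''}_\infty=\max_{I_0}|\phi''|\in(0,\infty)$, so dividing by it below is harmless.

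Next I would build $\mathcal P'$. Enumerate $\mathcal P=\{I_1,\dots,I_N\}$ in left-to-right order and set $m:=\lfloor N/2\rfloor$. Group the intervals two at a time, putting
\[
\mathcal P':=\bigl\{\,I_1\cup I_2,\ I_3\cup I_4,\ \dots,\ I_{2m-1}\cup I_{2m}\,\bigr\},
\]
and, when $N=2m+1$ is odd, append the leftover singleton $I_N$ as the final element of $\mathcal P'$. Every non-final element of $\mathcal P'$ has the form $I_{2i-1}\cup I_{2i}$ with $I_{2i-1},I_{2i}$ adjacent in $\mathcal P$, so by sub-admissibility $I_{2i-1}\cup I_{2i}$ fails property $P(\Gd)$. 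By the Taylor reformulation \eqref{Taylor} this reads $\sup_{s\in I_{2i-1}\cup I_{2i}}|\phi''(s)|\,|I_{2i-1}\cup I_{2i}|^2>4\Gd$; bounding the supremum by $\norm{\phi''}_\infty$ and taking square roots gives $|I_{2i-1}\cup I_{2i}|>2\sqrt{\Gd/\norm{\phi''}_\infty}$, which is exactly the asserted lower bound (with strict inequality to spare).

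For the counting consequence I would add lengths over the partition. Writing $L:=2\sqrt{\Gd/\norm{\phi''}_\infty}$ and using that $\mathcal P$ partitions $I_0$,
\[
|I_0|=\sum_{j=1}^{N}|I_j|\ \ge\ \sum_{i=1}^{m}\bigl(|I_{2i-1}|+|I_{2i}|\bigr)\ >\ mL,
\]
so $m<|I_0|/L$; since $N\le 2m+1$ this yields $N<2|I_0|/L+1=|I_0|\,\Gd^{-1/2}\norm{\phi''}_\infty^{1/2}+1$, which is the stated bound in the case $|I_0|=1$ relevant to this paper (e.g. $I_0=[0,1]$). The borderline case $N=1$ is immediate, the right-hand side being at least $1$.

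There is essentially no hard step here: it is one line of pigeonhole once the pairing is fixed. The points needing a little care are purely bookkeeping — placing the leftover interval in the odd case exactly as the permitted ``last'' element of $\mathcal P'$, excluding the affine case so normalizing by $\norm{\phi''}_\infty$ is legitimate, and noting that sub-admissibility furnishes the \emph{strict} inequality $|I_{2i-1}\cup I_{2i}|>L$, which is what lets the count close with the clean additive constant $+1$.
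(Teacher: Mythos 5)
Your proof is correct and follows essentially the same route as the paper's: pair consecutive intervals of $\mathcal P$, use sub-admissibility together with the Taylor reformulation \eqref{Taylor} to get the length lower bound $2\sqrt{\Gd/\norm{\phi''}_\infty}$ for each pair, and then count by summing lengths. Your observation that the final count really carries a factor of $|I_0|$ (so the stated bound is the case $|I_0|=1$) is a fair and accurate reading of what the paper leaves implicit.
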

\begin{proof}
Denote $\mathcal P$ as $a_j$, $0\leq j\leq n$. If $\mathcal P$ is the trivial partition, then we define $\mathcal P'$ to be trivial as well. If not, then in particular, $\phi$ cannot be linear, so $\norm {\phi''}_\infty>0$. By \eqref{Taylor}, we have
$$
a_{j+2}-a_j\geq \frac {2\Gd^{1/2}}{\norm {\phi''}^{1/2}_\infty}. 
$$
Therefore, we can define $\mathcal P'$ as follows. If $n$ is even, then we define $\mathcal P'=\{[a_{2k},a_{2(k+1)}]:0\leq k\leq n/2 \}$. If $n$ is odd, then we define $\mathcal P'=\{[a_{2k},a_{2(k+1)}]:0\leq k\leq (n-3)/2 \}\cup \{[a_{n-1},a_n]\}$. Thus $\mathcal P'$ is as required. 

The bound on the number of intervals in $\mathcal P$ follows immediately.
\end{proof}

\section{Decoupling for curves with nonvanishing curvature}\label{secnonzerocurve}
The results in this section will serve as the base case of our induction on scales. Our starting point will be Bourgain-Demeter's classical decoupling inequality \cite{MR3374964}.
\begin{thm}\label{BD15}
Let $2\leq p\leq 6$, $\Gd\in 2^{-2\N}$, and $M\geq 1$. Then for any $f\in L^p(\R^2)$ with Fourier support in $\mathcal N^{s^2}_{[0,1],M\Gd}$, we have
$$
\norm {f}_{L^p(\R^2)}\lesssim_{\Ge,M} \Gd^{-\Ge}\left(\sum_{j=1}^{\Gd^{-1/2}} \norm{f_{[(j-1)\Gd^{1/2},j\Gd^{1/2}]}}^2_{L^p(\R^2)}\right)^{\frac 1 2}.
$$
\end{thm}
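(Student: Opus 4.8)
The plan is to deduce Theorem~\ref{BD15} from the classical $l^2$-decoupling theorem of Bourgain and Demeter \cite{MR3374964}, which is exactly the content of the case $M=1$: the only thing beyond that cited result is to absorb the harmless thickening of the neighbourhood by the fixed constant $M$. This absorption is achieved by passing to the next dyadic scale above $M\Gd$ and performing a trivial refinement of caps, with all losses kept of size $O_{\Ge,M}(\Gd^{-\Ge})$.

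In detail, I would first dispose of the degenerate range. If $\Gd^{-1/2}\leq 2\sqrt M$, then $[0,1]$ is cut into at most $2\sqrt M$ caps $I_j$, and since these caps partition $[0,1]\supseteq\operatorname{supp}_s\hat f$ we have $f=\sum_j f_{I_j}$; the triangle inequality and Cauchy--Schwarz then give $\norm f_{L^p}\leq (2\sqrt M)^{1/2}\left(\sum_j\norm{f_{I_j}}_{L^p}^2\right)^{1/2}$, which is acceptable as $(2\sqrt M)^{1/2}\lesssim_M 1\leq\Gd^{-\Ge}$. Assume then $\Gd<(4M)^{-1}$, so $M\Gd<1/4$, and let $\Gd'$ be the least element of $2^{-2\N}$ with $\Gd'\geq M\Gd$; then $M\Gd\leq\Gd'\leq 4M\Gd<1$ and $\Gd'/\Gd\in 4^{\N}$ with $1\leq \Gd'/\Gd\leq 4M$. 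Since $\mathcal N^{s^2}_{[0,1],M\Gd}\sub\mathcal N^{s^2}_{[0,1],\Gd'}$, applying the $M=1$ case of the theorem (i.e.\ \cite{MR3374964}) at scale $\Gd'$ gives, with $J$ running over the caps $[(j-1)\Gd'^{1/2},j\Gd'^{1/2}]$,
$$
\norm f_{L^p(\R^2)}\lesssim_{\Ge}\Gd'^{-\Ge}\left(\sum_{J}\norm{f_J}^2_{L^p(\R^2)}\right)^{\frac 1 2}.
$$
Each such $J$ is the disjoint union of exactly $(\Gd'/\Gd)^{1/2}\leq 2\sqrt M$ caps $I$ of length $\Gd^{1/2}$ (these are integers since $\Gd,\Gd'\in 2^{-2\N}$), and $f_J=\sum_{I\sub J}f_I$, so the triangle inequality and Cauchy--Schwarz yield $\norm{f_J}_{L^p}\leq (2\sqrt M)^{1/2}\left(\sum_{I\sub J}\norm{f_I}_{L^p}^2\right)^{1/2}$. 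Summing over $J$ and using $\Gd'\geq M\Gd$ (hence $\Gd'^{-\Ge}\leq\Gd^{-\Ge}$) gives
$$
\norm f_{L^p(\R^2)}\lesssim_{\Ge,M}\Gd^{-\Ge}\left(\sum_{I}\norm{f_I}^2_{L^p(\R^2)}\right)^{\frac 1 2},
$$
the sum now over all caps $I$ of length $\Gd^{1/2}$, as desired.

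There is no genuine obstacle here: the entire mathematical content is the cited theorem of Bourgain and Demeter, and everything else is bookkeeping. The only point requiring any care is to verify that each loss incurred --- the replacement of $\Gd'^{-\Ge}$ by $\Gd^{-\Ge}$, the $O_M(1)$ factor from refining $\Gd'^{1/2}$-caps into $\Gd^{1/2}$-caps, and the Cauchy--Schwarz constant in the degenerate case --- is truly of size $O_{\Ge,M}(\Gd^{-\Ge})$ rather than a negative power of $\Gd$; this is precisely what dictates the case split at $\Gd\sim M^{-1}$.
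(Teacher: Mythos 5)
Your argument is correct. Note, though, that the paper does not actually prove Theorem \ref{BD15}: it only records in a remark that the case $M=1$ is the theorem of Bourgain--Demeter \cite{MR3374964} and that the general case follows by ``slightly adapting'' the proof of Theorem 5.1 of the study guide \cite{MR3592159}, i.e.\ by reopening the induction-on-scales machinery (or the local-to-global mollification) and tracking the constant $M$ through it. Your route is genuinely different and, to my mind, preferable: you treat the $M=1$ neighbourhood version as a black box, enlarge $M\Gd$ to the nearest $\Gd'\in 2^{-2\N}$ with $M\Gd\leq \Gd'\leq 4M\Gd$, apply decoupling into $\Gd'^{1/2}$-caps, and then refine each $\Gd'^{1/2}$-cap into its $(\Gd'/\Gd)^{1/2}\leq 2\sqrt M$ constituent $\Gd^{1/2}$-caps by triangle inequality and Cauchy--Schwarz. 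The one point that makes this legitimate --- and which you correctly isolate --- is that passing from coarser caps to finer caps is in general lossy, but here the refinement multiplicity is $O_M(1)$, so the total loss is $O_{\Ge,M}(\Gd^{-\Ge})$; likewise $\Gd'^{-\Ge}\leq\Gd^{-\Ge}$ since $M\geq 1$, and the alignment of the two cap grids is guaranteed by $\Gd,\Gd'\in 2^{-2\N}$. The degenerate case $\Gd\gtrsim M^{-1}$ is handled trivially as you say. What the paper's suggested route buys is nothing extra for this statement; what yours buys is that the cited theorem never needs to be reproved.
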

{\it Remark.} The introduction of the constant $M$ here is just for technical reasons. In \cite{MR3374964} the authors only mentioned the case $M=1$, but one can slightly adapt the proof of Theorem 5.1 of \cite{MR3592159} to prove Theorem \ref{BD15}. We also refer the interested reader to \cite{MR3971577} and \cite{MR4024357} for the technical details.

\subsection{Uniform decoupling for quadratic polynomials, canonical partition}
We first slightly generalise this theorem to all quadratic functions $\phi$, with the same dyadic partition. This is achieved by the following simple lemma.
\begin{lem}\label{lemlin}
Let $1<p<\infty$, $r>0$ and $\mathcal P$ be any finite partition of a compact interval $I_0$. For any $C^2$ function $\phi:I_0\to \R$, let $K(\phi,r)$ be the best constant such that for all $f\in L^p(\R^2)$ with Fourier support in $\mathcal N^{\phi}_{I_0,r}$, we have
$$
\norm {f}_{L^p(\R^2)}\leq K(\phi,r)\left(\sum_{I\in\mathcal P} \norm{f_{I}}^2_{L^p(\R^2)}\right)^{\frac 1 2}.
$$
Then for any linear function $l(s)=cs+d$ and any $\Gl>0$, we have
$$
K(\phi,r)=K(\Gl\phi+l,\Gl^{-1}r).
$$
\end{lem}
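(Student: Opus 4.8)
The plan is to exhibit an explicit affine change of variables on $\R^2$ that turns a function $f$ adapted to $\mathcal N^{\Gl\phi+l}_{I_0,\Gl^{-1}r}$ into one adapted to $\mathcal N^{\phi}_{I_0,r}$, in a way that commutes with the Fourier truncation operators $g \mapsto g_I$ and preserves all the $L^p$ norms up to an overall constant; then the equality of best constants is immediate. Concretely, since the truncation $g \mapsto g_I$ only reads off the first Fourier coordinate $s$, I want a linear map that fixes the $s$-variable and acts only on the $t$-variable (and on the conjugate side). So I would set, for $g$ with $\hat g$ supported in $\mathcal N^{\Gl\phi+l}_{I_0,\Gl^{-1}r}$, a new function $h$ defined on the Fourier side by $\hat h(s,t) = \hat g(s, \Gl^{-1} t + \text{(shear in }s))$, i.e. a combination of a vertical dilation by $\Gl$ and a shear that removes the linear term $l(s)=cs+d$; equivalently, $h(x,y)$ is $g$ composed with the transpose inverse of that map times a modulation coming from the constant $d$. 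Then $\hat h$ is supported exactly where $|\,t - \phi(s)\,| \le r$, since the dilation scales the vertical thickness $\Gl^{-1}r \mapsto r$ and turns the graph of $\Gl\phi+l$ into the graph of $\phi$ after the shear.

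The key steps, in order, are: (1) write down the affine map $T$ on $\R^2$ explicitly, $T(s,t) = (s, \Gl t + cs + d)$ or its appropriate inverse/transpose, and check that it carries $\mathcal N^{\phi}_{I_0,r}$ onto $\mathcal N^{\Gl\phi+l}_{I_0,\Gl^{-1}r}$; (2) define the corresponding unitary-up-to-scalar operator $U$ on $L^p(\R^2)$ — composition with a linear map plus a modulation by $e(\cdot)$ to absorb the constant $d$ — and note that $\widehat{Ug}$ is $\hat g$ precomposed with $T$ (or $T^{-\top}$), hence $Ug$ has Fourier support in the right curved neighbourhood iff $g$ does; (3) observe that $U$ is a bijection of the relevant Fourier-support class onto the other one, that $\norm{Ug}_{L^p(\R^2)} = |\det|^{1/p}\norm{g}_{L^p(\R^2)}$ with the \emph{same} Jacobian factor for every piece, and crucially that $U$ commutes with the truncation operators, i.e. $(Ug)_I = U(g_I)$, because $T$ does not mix the $s$-coordinate; (4) feed this into the definition of $K(\cdot,\cdot)$: applying the decoupling inequality for $\phi$ at scale $r$ to $Ug$ and undoing $U$ gives the decoupling inequality for $\Gl\phi+l$ at scale $\Gl^{-1}r$ with the same constant, proving $K(\Gl\phi+l,\Gl^{-1}r)\le K(\phi,r)$; the reverse inequality follows by symmetry (replace $\phi$ by $\Gl\phi+l$, $\Gl$ by $\Gl^{-1}$, $l$ by $-\Gl^{-1}l$).

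The only genuinely delicate point — and where I would be most careful — is the interaction between the $L^p$ normalisation and the Fourier-side definition of $f_I$ in Definition \ref{fourierres}, which requires $\hat f \in L^1$; I would either note that the class of such $f$ is dense / that the multiplier $1_I(s)$ is bounded on $L^p$ (as the Remark after Definition \ref{fourierres} already records) so the identity $(Ug)_I = U(g_I)$ extends by continuity, or simply restrict attention to nice $f$ since $K$ is a supremum over a dense class. Everything else is bookkeeping: tracking the Jacobian $|\det T| = \Gl$ and confirming it cancels between the two sides of the inequality, and checking the shear genuinely sends the graph of $\Gl\phi+l$ to that of $\phi$. I expect no real obstacle here — this is a structural ``affine invariance of decoupling'' statement — so the proof should be short.
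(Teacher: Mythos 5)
Your proposal is correct and follows essentially the same route as the paper: a shear on the Fourier side to remove the linear term $l$ (which fixes the $s$-variable, hence commutes with the truncations $f\mapsto f_I$ and costs only a unimodular modulation and a Jacobian-one change of variables in physical space), combined with a vertical dilation by $\Gl$ to match the scales $r$ and $\Gl^{-1}r$, with the reverse inequality by symmetry. The paper merely separates the two steps (first $\Gl=1$, then general $\Gl$), and your attention to the constant Jacobian factor cancelling on both sides and to the $\hat f\in L^1$ technicality is consistent with what the paper does implicitly.
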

\begin{proof}
We first prove the case $\Gl=1$. It suffices to show $K(\phi+l,r)\leq K(\phi,r)$. Let $f\in L^p(\R^2)$ with Fourier support in $\mathcal N^{\phi+l}_{I_0,r}$. Then we define $g$ by the relation
$$
\hat g(s,t)=\hat f(s,t+l(s)).
$$
Then $g\in L^p(\R^2)$ with Fourier support in $\mathcal N^{\phi}_{I_0,r}$. Applying the definition of $K(\phi)$, we have
$$
\norm {g}_{L^p(\R^2)}\leq K(\phi,r)\left(\sum_{I\in\mathcal P} \norm{g_{I}}^2_{L^p(\R^2)}\right)^{\frac 1 2}.
$$
By direct computation, for any interval $I\sub [0,1]$, we have
$$
g_I(x,y)=e(-yd)f_I(x-cy,y),
$$
which shows that $\norm {g_I}_{L^p(\R^2)}=\norm {f_I}_{L^p(\R^2)}$ since the Jacobian determinant is $1$. Hence $K(\phi+l,r)\leq K(\phi,r)$ and our result follows.

For other $\Gl>0$, we apply a rescaling. Let $f\in L^p(\R^2)$ with Fourier support in $\mathcal N^{\Gl\phi+l}_{I_0,r}$. Define
$$
\hat g (s,t)=\hat f(s,\Gl t).
$$
Then $g\in L^p(\R^2)$ with Fourier support in $\mathcal N^{\phi+\Gl^{-1}l}_{I_0,\Gl^{-1}r}$. Apply the definition of $K(\phi,r)$ to $g$ and the case $\Gl=1$. Rescaling back, the result then follows.
\end{proof}
Combining Theorem \ref{BD15} and Lemma \ref{lemlin} gives the following corollary.
\begin{cor}[Uniform decoupling for quadratic polynomials, canonical partition]\label{bs2}
Let $2\leq p\leq 6$ and $M\geq 1$. Let $\phi(s)=bs^2+cs+d$ where $b\leq M$ and $c,d\in \R$. Then for any $\Gd\in 2^{-2\N}$ and any $f\in L^p(\R^2)$ with Fourier support in $\mathcal N^{\phi}_{[0,1],Mb\Gd}$, we have
$$
\norm {f}_{L^p(\R^2)}\lesssim_{\Ge,M} \Gd^{-\Ge}\left(\sum_{j=1}^{\Gd^{-1/2}} \norm{f_{[(j-1)\Gd^{1/2},j\Gd^{1/2}]}}^2_{L^p(\R^2)}\right)^{\frac 1 2}.
$$
\end{cor}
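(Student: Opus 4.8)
The plan is to deduce Corollary~\ref{bs2} from the Bourgain--Demeter decoupling inequality (Theorem~\ref{BD15}) by means of the affine rescaling of Lemma~\ref{lemlin}. The point is that the width $Mb\Gd$ appearing in the hypothesis is calibrated precisely so that the rescaling lands on an $M\Gd$-neighbourhood of the standard parabola; Theorem~\ref{BD15} then applies with no loss and with a constant depending only on $p$, $\Ge$ and $M$, hence uniformly over all quadratics with $b\le M$.

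First I would dispose of the degenerate cases. If $b\le 0$ then $\mathcal N^{\phi}_{[0,1],Mb\Gd}$ is empty, or (when $b=0$) a set of planar measure zero, so the only $f\in L^p(\R^2)$ with Fourier support there is $f=0$ and the inequality is trivial; hence we may assume $b>0$. Since $\Gd\in 2^{-2\N}$, the number $\Gd^{-1/2}$ is a positive integer and the canonical partition $\mathcal P=\{[(j-1)\Gd^{1/2},j\Gd^{1/2}]\}_{j=1}^{\Gd^{-1/2}}$ of $[0,1]$ is well defined; fix it, and let $K(\,\cdot\,,\,\cdot\,)$ be the best-constant functional attached to $\mathcal P$ and the exponent $p$ as in Lemma~\ref{lemlin}. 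The inequality to be proved is exactly $K(\phi,Mb\Gd)\lesssim_{\Ge,M}\Gd^{-\Ge}$.

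Next I would apply Lemma~\ref{lemlin} with base curve $\psi(s)=s^2$, dilation parameter $\Gl=b$ and linear function $l(s)=cs+d$, so that $\Gl\psi+l=\phi$. Concretely, the vertical dilation $\widehat g(s,t)=\widehat f(s,bt)$ sends a function Fourier supported in $\mathcal N^{\phi}_{[0,1],Mb\Gd}$ to one Fourier supported in the $M\Gd$-neighbourhood of the parabola $s\mapsto s^2+b^{-1}(cs+d)$, and the affine term $b^{-1}(cs+d)$ is then removed by the translation step in the proof of Lemma~\ref{lemlin}; under these operations the $L^p$-norms of $f$ and of each $f_I$ change only by a common multiplicative constant, so Lemma~\ref{lemlin} yields
$$
K(\phi,Mb\Gd)=K(s^2,M\Gd).
$$
Since $M\ge 1$, Theorem~\ref{BD15} applies with its free parameter taken equal to $M$ and gives $K(s^2,M\Gd)\lesssim_{\Ge,M}\Gd^{-\Ge}$; unwinding the definition of $K$ for the partition $\mathcal P$ is precisely the claimed bound.

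I do not expect a genuine obstacle here: the statement amounts to a change of variables feeding into Theorem~\ref{BD15} and Lemma~\ref{lemlin}. The only points that require attention are bookkeeping ones: checking that the vertical dilation by $b$ turns the $Mb\Gd$-neighbourhood of $\phi$ into exactly the $M\Gd$-neighbourhood of a parabola (so that the clean constant $M$, rather than some power of $M$ such as $M^3$, is what enters Theorem~\ref{BD15}), and confirming that the integrality hypothesis $\Gd\in 2^{-2\N}$, needed in order to even speak of the canonical partition, is common to both Theorem~\ref{BD15} and the hypotheses of Corollary~\ref{bs2}.
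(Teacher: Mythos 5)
Your proposal is correct and follows the same route as the paper, which derives Corollary~\ref{bs2} directly by combining Theorem~\ref{BD15} with the affine/dilation invariance of Lemma~\ref{lemlin}; your explicit computation that the vertical dilation $\widehat g(s,t)=\widehat f(s,bt)$ carries the $Mb\Gd$-neighbourhood of $bs^2+cs+d$ onto the $M\Gd$-neighbourhood of $s^2+b^{-1}(cs+d)$ matches the mechanism of the lemma's proof (and correctly resolves the scaling of the neighbourhood width, where the lemma's literal statement is slightly off). The paper leaves all of this implicit, so your write-up simply supplies the bookkeeping it omits.
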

\subsection{Decoupling for curves with nonvanishing curvature, canonical partition}
In \cite{MR3374964}, the authors generalised the decoupling inequality to $C^2$-hypersurfaces with positive principle curvatures, using a Pramanik-Seeger type iteration, which is also used in \cite{MR4078083}. However, for our purposes, it is better for us to keep track of how the decoupling constants actually depend on the geometric quantities of the curves. So we will be essentially reproducing their proof, but with more careful consideration of the constant factors.

\begin{lem}\label{nonzero}
Let $2\leq p\leq 6$, $\Gd\in 2^{-2\N}$ and $M>1$. Let $\phi:[0,1]\to \R$ be a $C^3$ function with $\inf_{s\in [0,1]}|\phi''(s)|\geq M^{-1}$, $\norm {\phi''}_\infty \leq M$ and $\norm {\phi'''}_\infty \leq M$. Then for any $f\in L^p(\R^2)$ with Fourier support in $\mathcal N^{\phi}_{[0,1],M\Gd}$, we have (where we denote $a_j=j\Gd^{1/2}$, $0\leq j\leq \Gd^{-1/2}$)
$$
\norm {f}_{L^p(\R^2)}\lesssim_{\Ge,M} \Gd^{-\Ge}\left(\sum_{j=1}^{\Gd^{-1/2}} \norm{f_{[a_{j-1},a_j]}}^2_{L^p(\R^2)}\right)^{\frac 1 2}.
$$
\end{lem}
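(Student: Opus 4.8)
\textbf{Proof proposal for Lemma \ref{nonzero}.}

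The plan is to reduce the variable-curvature curve $\phi$ to the model parabola $s^2$ by a Pramanik--Seeger type iteration on scales, using Corollary \ref{bs2} as the base case at each step but applied \emph{locally} after an affine rescaling that approximately flattens $\phi$ to a quadratic. Fix a large intermediate parameter $\rho$ (a small dyadic power of $\Gd$, to be chosen), and partition $[0,1]$ into intervals $J$ of length $\rho^{1/2}$. On each such $J$, write $\phi(s) = \phi(c_J) + \phi'(c_J)(s-c_J) + \tfrac12\phi''(c_J)(s-c_J)^2 + E_J(s)$, where $c_J$ is the center of $J$ and $E_J$ is the cubic-order error; since $\norm{\phi'''}_\infty \le M$, we have $\norm{E_J}_{L^\infty(J)} \lesssim_M \rho^{3/2}$. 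The key point is that if $\rho^{3/2} \lesssim M\Gd$, i.e.\ $\rho \lesssim_M \Gd^{2/3}$, then on each $J$ the $M\Gd$-vertical neighbourhood of $\phi$ is contained in the $CM\Gd$-vertical neighbourhood of its osculating parabola at $c_J$ for an absolute constant $C$. Then, after the affine change of variables sending $J$ to $[0,1]$ (which by Lemma \ref{lemlin} preserves the relevant decoupling constant up to the linear part, and rescales the curvature and the neighbourhood thickness consistently), Corollary \ref{bs2}, with the constant $M$ there replaced by $CM^2$ to absorb $\inf|\phi''| \ge M^{-1}$ and $\sup|\phi''|\le M$, decouples $f_J$ into pieces $f_I$ over subintervals $I$ of length $(\Gd/\rho)^{1/2}\rho^{1/2} = \Gd^{1/2}$ inside $J$. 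This yields
$$
\norm{f_J}_{L^p} \lesssim_{\Ge,M} (\Gd/\rho)^{-\Ge}\Big(\sum_{I\subset J,\, |I|=\Gd^{1/2}} \norm{f_I}_{L^p}^2\Big)^{1/2}.
$$

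Next I would decouple the original $f$ into the pieces $f_J$ over the length-$\rho^{1/2}$ intervals $J$. This is again an instance of Corollary \ref{bs2} at scale $\rho$ (with $M$ replaced by $M$): the Fourier support of $f$ lies in $\mathcal N^\phi_{[0,1],M\Gd} \subset \mathcal N^\phi_{[0,1],M\rho}$ since $\Gd \le \rho$, and the osculating-parabola comparison over all of $[0,1]$ at scale $\rho$ works because $\norm{\phi - (\text{osculating quadratic at }1/2)}_{L^\infty[0,1]}$ need not be small, but we can instead directly apply the curvature decoupling \emph{at scale} $\rho$ — here one runs the same iteration, or more simply observes that Corollary \ref{bs2} combined with a single comparison step (since a curve with $\inf|\phi''|\ge M^{-1}$, $\sup|\phi''|\le M$ is, after the trivial affine normalisation, within bounded distance of a fixed parabola in the relevant sense at any scale $\ge$ its own "flatness scale") gives
$$
\norm{f}_{L^p} \lesssim_{\Ge,M} \rho^{-\Ge}\Big(\sum_{|J|=\rho^{1/2}} \norm{f_J}_{L^p}^2\Big)^{1/2}.
$$
Concatenating the two displays via Minkowski/Cauchy--Schwarz (the $f_I$ for distinct $J$ have disjoint frequency projections in $s$) produces the bound with constant $\lesssim_{\Ge,M} \rho^{-\Ge}(\Gd/\rho)^{-\Ge} = \Gd^{-\Ge}$, as desired. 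Since only finitely many iteration steps are needed (two, if $\rho$ is chosen near $\Gd^{2/3}$; or one could iterate $O(1)$ times choosing $\rho = \Gd^{2/3}$ repeatedly until the flatness constraint is met trivially), the constant does not blow up.

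The main obstacle, and the step requiring genuine care, is the base-case comparison: justifying that on an interval where the cubic error $E_J$ is $\lesssim_M \Gd$, one may legitimately replace $\phi$ by its osculating parabola inside the decoupling inequality. The clean way is to note that $\mathcal N^\phi_{J,M\Gd}$ and $\mathcal N^{P_J}_{J,M\Gd}$ (with $P_J$ the osculating quadratic) are each contained in a fixed dilate of the other, and that a Fourier multiplier supported in a vertical $O(\Gd)$-neighbourhood of a graph can be composed with the multipliers $1_I(s)$ without loss in $L^p$ (using the Hilbert-transform remark after Definition \ref{fourierres}, iterated over the finitely many intervals, or a smooth partition of unity adapted to the $\Gd$-neighbourhoods); a function Fourier-supported in $\mathcal N^\phi_{J,M\Gd}$ is then also Fourier-supported in $\mathcal N^{P_J}_{J,CM\Gd}$, so the quadratic decoupling applies verbatim to it with $M$ enlarged. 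One must also check the affine rescaling bookkeeping in Lemma \ref{lemlin}: the map $J\to[0,1]$ scales $s$ by $\rho^{-1/2}$ and $\phi$ by a factor comparable (up to $M$) to $\rho^{-1}$, hence scales the neighbourhood thickness from $M\Gd$ to $\sim_M \rho^{-1}\Gd$ and the subinterval length from $\Gd^{1/2}$ to $(\Gd/\rho)^{1/2}$, which is exactly the configuration Corollary \ref{bs2} handles with $\Gd$ replaced by $\Gd/\rho \in 2^{-2\N}$ (arranged by choosing $\rho$ a suitable dyadic power). Everything else is routine triangle inequality and Cauchy--Schwarz.
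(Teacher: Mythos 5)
Your overall strategy is the same as the paper's: a Pramanik--Seeger type iteration in which, at an intermediate scale $\rho\sim\Gd^{2/3}$, the curve over each length-$\rho^{1/2}$ interval $J$ is replaced by its osculating parabola (with cubic error $\lesssim_M\rho^{3/2}\lesssim_M\Gd$ absorbed into the neighbourhood thickness), followed by an affine rescaling of $J$ to $[0,1]$ and an application of Corollary \ref{bs2} at scale $\Gd/\rho$. Your bookkeeping for that step (the thickness rescaling to $\sim_M\rho^{-1}\Gd$ and the subinterval length $(\Gd/\rho)^{1/2}\rho^{1/2}=\Gd^{1/2}$) is correct and matches the paper.

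The gap is in the top-level step, where you decouple $f$ into the pieces $f_J$ at scale $\rho$. Your ``more simply'' route --- that a curve with $M^{-1}\le|\phi''|\le M$ is ``within bounded distance of a fixed parabola in the relevant sense at any scale'' --- is false in the sense needed: the hypotheses only give $|\phi(s)-P(s)|\lesssim M|s-c|^3$, which over all of $[0,1]$ is $O(M)$, not $O(\rho)$, so no single comparison step reduces the scale-$\rho$ decoupling on $[0,1]$ to Corollary \ref{bs2}. The only available route is the one you mention in passing, ``run the same iteration'' --- but then your count of the number of steps is wrong. Two steps do not suffice, and neither do $O(1)$: the scales evolve as $\Gd\to\Gd^{2/3}\to\Gd^{(2/3)^2}\to\cdots$, and reaching scale $\sim 1$ takes $n\sim\log\log(\Gd^{-1})$ steps, each of which loses a constant factor $C_{\Ge,M}$ on top of the $\Ge$-power. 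Your proof as written therefore does not establish the stated bound. The fix is exactly the paper's: define $K(\Gd)$ as the best constant in the inequality, prove the recursion $K(\Gd)\le C_{\Ge,M}\Gd^{-\Ge}K(\Gd')$ with $\Gd'$ the smallest dyadic scale $\ge\Gd^{2/3}$, unfold it $n\lesssim\log\log(\Gd^{-1})$ times (the geometric series of exponents $\Ge(1+\tfrac23+\cdots)$ stays bounded by $3\Ge$), and observe that
$$
C_{\Ge,M}^{\,n}\le(\log(\Gd^{-1}))^{C\log C_{\Ge,M}}\lesssim_{\Ge,M}\Gd^{-\Ge},
$$
so the accumulated constant is still acceptable. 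This last observation is not optional; without it the argument does not close.
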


\begin{proof}
Let $K(\Gd)=K(\Gd,M)$ be the best constant such that for any $f\in L^p(\R^2)$ with Fourier support in $\mathcal N^{\phi}_{[0,1],M\Gd}$, we have
$$
\norm {f}_{L^p(\R^2)}\leq K(\Gd)\left(\sum_{j=1}^{\Gd^{-1/2}} \norm{f_{[a_{j-1},a_j]}}^2_{L^p(\R^2)}\right)^{\frac 1 2}.
$$
Our goal is to show that $K(\Gd)\lesssim_{\Ge,M}\Gd^{-\Ge}$.

Let $\Gd'\in 2^{-2\N}$, $\Gd\leq \Gd'\leq 1$ be an intermediate scale to be determined. Since $f$ is Fourier supported in $\mathcal N^{\phi}_{[0,1],M\Gd}$ and $\Gd'\geq \Gd$, it is Fourier supported in $\mathcal N^{\phi}_{[0,1],M\Gd'}$ as well. Apply the definition of $K(\Gd')$ to get (where we denote $b_k=k\Gd'^{1/2}$, $0\leq k\leq \Gd'^{-1/2}$)
$$
\norm {f}_{L^p(\R^2)}\leq K(\Gd')\left(\sum_{k=1}^{\Gd'^{-1/2}} \norm{f_{[b_{k-1},b_k]}}^2_{L^p(\R^2)}\right)^{\frac 1 2}.
$$
Our hope is that for each $1\leq k\leq \Gd'^{-1/2}$, the graph of $\phi$ over $[b_{k-1},b_k]$ is approximated by a parabola with error $O(\Gd)$. We define 
$$
p_k(s)=\phi(b_{k-1})+\phi'(b_{k-1})(s-b_{k-1})+\frac 1 2\phi''(b_{k-1})(s-b_{k-1})^2.
$$
Then by Taylor expansion, for $s\in [b_{k-1},b_k]$, we have
$$
|\phi(s)-p_k(s)|\leq \frac{\norm {\phi'''}_\infty}{6} \Gd'^{3/2}\leq  M \Gd'^{3/2}.
$$
This suggests that we take $\Gd'$ to be the smallest number in $2^{-2\N}$ such that $\Gd'\geq\Gd^{2/3}$.

With this choice of $\Gd'$ (so $\Gd'<4\Gd^{2/3}$), since $f$ is Fourier supported in $\mathcal N^{\phi}_{[0,1],M\Gd}$, we have 
$$
\mathrm{supp}(\hat f_{[b_{k-1},b_k]})\sub \mathcal N^{p_k}_{[b_{k-1},b_k],9M\Gd}\sub \mathcal N^{p_k}_{[b_{k-1},b_k],18M^2 \phi''(b_{k-1}) \Gd/2},
$$
where we have used the assumption that $\inf_{s\in [0,1]}\phi''(s)\geq M^{-1}$.

Applying Corollary \ref{bs2} with $M^2$ and $b=\phi''(b_{k-1})/2\leq M^2$, we get
$$
\norm {f_{[a_{k-1},a_k]}}_{L^p(\R^2)}\lesssim_{\Ge,M}\Gd^{-\Ge}\left(\sum_{j:[a_{j-1},a_j]\sub [b_{k-1},b_k]} \norm{f_{[a_{j-1},a_j]}}^2_{L^p(\R^2)}\right)^{\frac 1 2}.
$$
Squaring both sides and summing over $k$, we obtain
$$
\norm f_{L^p(\R^2)}\lesssim_{\Ge,M}K(\Gd')\Gd^{-\Ge}\left(\sum_{j=1}^{\Gd^{-1/2}} \norm{f_{[a_{j-1},a_j]}}^2_{L^p(\R^2)}\right)^{\frac 1 2}.
$$
This implies that 
$$
K(\Gd)\leq C_{\Ge,M}\Gd^{-\Ge}K(\Gd'),
$$
where we recall $\Gd'$ is the smallest number in $2^{-2\N}$ such that $\Gd'\geq \Gd^{2/3}$. For $\Gd<1/4$, we iterate this inequality $n$ times until we get to the scale $1/4$:
$$
K(\Gd)\leq C_{\Ge,M}^n \Gd^{-\Ge \left(1+\frac 2 3+\dots+\left(\frac 2 3\right)^{n-1}\right)}K\left(\frac 1 4\right)\leq CC_{\Ge,M}^n \Gd^{-3\Ge},
$$
since $K(1/4)\sim 1$ by triangle inequality and Cauchy-Schwarz. Lastly, since $\Gd'\geq\Gd^{2/3}$ in each iteration and $n$ is the first time we stop the iteration, we have $\Gd^{(2/3)^{n-1}}< 1/4$.
This shows that 
$$
n<1+\frac {\log\log (\Gd^{-1})-\log \log 4}{\log (3/2)}\leq C\log\log (\Gd^{-1}),
$$
for some suitable absolute constant $C$. Thus
$$
C_{\Ge,M}^n\leq (\log(\Gd^{-1}))^{C\log C_{\Ge,M}}\lesssim_{\Ge,M} \Gd^{-\Ge},
$$
and hence we have $K(\Gd)\lesssim_{\Ge,M}\Gd^{-4\Ge}$.
\end{proof}

\subsection{Decoupling for curves with nonvanishing curvature}
We may finally upgrade Lemma \ref{nonzero} to the case of an arbitrary sub-admissible partition. For future use, we first introduce another notation.
\begin{defn}[Decoupling for a phase function]\label{decouplingsinglefunction}
Let $1<p<\infty$. Let $\phi:[0,1]\to \R$ be $C^2$. For $\Gd>0$, let $D^\phi_p(\Gd)$ be the best constant such that for each $f\in L^p(\R^2)$ with Fourier support in $\mathcal N^\phi_{[0,1],\Gd}$, we have
$$
\norm {f}_{L^p(\R^2)}\leq D^\phi_p(\Gd)\left(\sum_{I\in\mathcal P}\norm {f_I }^2_{L^p(\R^2)}\right)^{\frac 1 2},
$$
where $\mathcal P$ is any sub-admissible partition of $[0,1]$ for $\phi$ at scale $\Gd$.
\end{defn}

\begin{thm}\label{quaddom}
Let $2\leq p\leq 6$. Let $M\geq 1$ and let $\phi:[0,1]\to \R$ be a $C^3$ function with
\begin{equation}\label{curvaturegood}
     \norm{\phi'''}_{\infty}+\norm{\phi''}_\infty\leq M\inf_{s\in [0,1]}|\phi''(s)|\quad\text{and}\quad \norm {\phi''}_\infty\leq M.
\end{equation}
Then for any $\Ge>0$, there is some $C_{\Ge,M}$ such that $D^{\phi}_p(\Gd)\leq  C_{\Ge,M}\Gd^{-\Ge}$ for any $0<\Gd\leq 1$.
\end{thm}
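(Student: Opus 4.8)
The plan is to run an induction on the scale $\delta$ modelled on the proof of Lemma \ref{nonzero}, but carrying the sub-admissible partition $\mathcal P$ through the iteration and coarsening it at each stage. First I would make the harmless reductions used elsewhere in the paper: replace $\delta$ by the least power of $4$ that is $\ge\delta$ (this only enlarges $\mathcal N^\phi_{[0,1],\delta}$ and costs a bounded factor), and rescale in the vertical variable as in the proof of Theorem \ref{uniform} so that $\norm{\phi''}_\infty=1$; under this normalization \eqref{curvaturegood} becomes $\inf_{[0,1]}|\phi''|\ge M^{-1}$ and $\norm{\phi'''}_\infty\le M$, i.e.\ exactly the hypotheses of Lemma \ref{nonzero} and, after a further affine change, of Corollary \ref{bs2}. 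This class of phases is closed under restriction to a subinterval followed by the affine rescaling of that subinterval to $[0,1]$, and it contains every nondegenerate quadratic; moreover, by \eqref{Taylor}, for such $\phi$ every sub-admissible partition at scale $\delta\gtrsim_M 1$ is trivial, which is the base case.

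For the inductive step, fix $\phi$ as above, a sub-admissible partition $\mathcal P$ of $[0,1]$ at scale $\delta$, and $f$ Fourier supported in $\mathcal N^\phi_{[0,1],\delta}$. Let $\delta'\in 2^{-2\N}$ be least with $\delta'\ge\delta^{2/3}$, and let $\mathcal P'$ be obtained from $\mathcal P$ by greedily merging consecutive intervals as long as the merged interval still satisfies property $P(\delta')$. A short check shows $\mathcal P'$ is sub-admissible for $\phi$ at scale $\delta'$, that $\mathcal P$ refines $\mathcal P'$, and that each $I'\in\mathcal P'$ is either one interval of $\mathcal P$ (so $\mathcal P$ is trivial on $I'$) or a union of at least two, in which case $I'$ satisfies $P(\delta')$ and hence $|I'|\lesssim_M\delta^{1/3}$. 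Since $f$ is Fourier supported in $\mathcal N^\phi_{[0,1],\delta'}$, decoupling into $\{f_{I'}\}_{I'\in\mathcal P'}$ costs $D^\phi_p(\delta')$. On each ``long'' cap $I'$, $\phi$ lies within $O_M(\delta)$ of its degree-two Taylor polynomial $p_{I'}$ (using $|I'|\lesssim_M\delta^{1/3}$), so $f_{I'}$ is Fourier supported in $\mathcal N^{p_{I'}}_{I',O_M(\delta)}$; after rescaling $I'$ to $[0,1]$ and normalizing the (curvature-comparable, hence nonzero) leading coefficient of $p_{I'}$ to $1$, one decouples $f_{I'}$ into the induced partition $\mathcal P|_{I'}$ at cost $\lesssim_{\Ge,M}\delta^{-\Ge}$ — when $\mathcal P|_{I'}$ is, up to bounded overlap, the canonical partition of a nondegenerate parabola this is Corollary \ref{bs2} directly, and otherwise one coarsens $\mathcal P|_{I'}$ in the same way and recurses so that only boundedly many applications of Corollary \ref{bs2} occur on each cap. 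Squaring and summing the resulting bounds $\norm{f_{I'}}_p\le C_{\Ge,M}\delta^{-\Ge}(\sum_{I\subseteq I'}\norm{f_I}_p^2)^{1/2}$ over $I'\in\mathcal P'$ (the ``short'' caps contribute with constant $1$) gives
$$D^\phi_p(\delta)\le K\,C_{\Ge,M}\,\delta^{-\Ge}\,D^\phi_p(\delta').$$
Because the intermediate scale jumps as $\delta\mapsto\delta^{2/3}$, iterating from $\delta$ up to scale $\sim_M 1$ takes only $\lesssim\log\log(1/\delta)$ steps, the exponents $\Ge,\tfrac23\Ge,\tfrac49\Ge,\dots$ sum to a geometric series, and $(KC_{\Ge,M})^{O(\log\log(1/\delta))}\lesssim_{\Ge,M}\delta^{-\Ge}$, exactly as in the proof of Lemma \ref{nonzero}; this yields $D^\phi_p(\delta)\lesssim_{\Ge,M}\delta^{-C\Ge}$, and undoing the normalization proves Theorem \ref{quaddom}.

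I expect the crux to be the bookkeeping rather than any individual inequality. Three points need care: that the greedy coarsening really keeps every merged cap of length $\lesssim_M\delta^{1/3}$ (so the Taylor approximation is accurate to order $\delta$); that sub-admissibility of $\mathcal P|_{I'}$ survives restriction, affine rescaling, and the replacement of $\phi$ by $p_{I'}$ — the last only up to bounded constants, which is where the pinching $M^{-1}\le|\phi''|\le 1$ is used, and which forces one to tolerate a constant-factor discrepancy between the thickness of the frequency neighbourhood and the scale of the partition, just as the factor $M$ does in Lemma \ref{nonzero}; and, most delicately, that each application of Corollary \ref{bs2} contributes a genuine $\delta^{-\Ge}$ (so that one must actually reduce the within-cap partition to the canonical one rather than merely recursing, since a purely recursive scheme only yields a super-polynomial bound), while the $\Ge$-losses across caps and scales telescope and the multiplicative constants stay sub-polynomial in $1/\delta$ — which is exactly what the $\delta\mapsto\delta^{2/3}$ step size buys, by keeping the number of stages at $O(\log\log(1/\delta))$.
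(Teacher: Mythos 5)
Your overall architecture (normalize so that $M^{-1}\le|\phi''|\le 1$, pass through an intermediate scale $\delta'\sim\delta^{2/3}$, approximate by quadratic Taylor polynomials on caps satisfying $P(\delta')$, and close a $\log\log(1/\delta)$-step iteration) is sound and matches the role that Lemma \ref{nonzero} plays in the paper. The gap is the within-cap step. After coarsening, $f_{I'}$ must be decoupled into $\mathcal P|_{I'}$, which is an \emph{arbitrary} sub-admissible partition at scale $\delta$ of an interval of length up to $\sim_M\delta^{1/3}$: sub-admissibility only forces adjacent \emph{pairs} to have length $\gtrsim\delta^{1/2}$, so a single interval of $\mathcal P|_{I'}$ may have length anywhere between essentially $0$ and $|I'|$, i.e.\ up to $\delta^{-1/6}$ canonical lengths. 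Corollary \ref{bs2} only decouples into the canonical equal-length partition, and the naive repair --- decouple to canonical scale $\delta^{1/2}$ and then reassemble the canonical pieces into the intervals $I\in\mathcal P|_{I'}$ --- loses a factor $\sup_I(|I|/\delta^{1/2})^{1/2}$, which can be a power of $\delta^{-1}$. You flag this yourself ("one must actually reduce the within-cap partition to the canonical one rather than merely recursing"), and you correctly diagnose that a purely recursive scheme does not close, but you never supply the reduction; as written, "up to bounded overlap the canonical partition ... otherwise coarsen and recurse" is an assertion of exactly the statement being proved (Theorem \ref{quaddom} for a nondegenerate quadratic), so the argument is circular at its core step.

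The missing ingredient is the paper's handling of variable-length partitions, which is where all the real work of Theorem \ref{quaddom} happens: first use Proposition \ref{number} to pair adjacent intervals so that every block has length $\ge 2(\delta/M)^{1/2}$; then pigeonhole the blocks into $O_M(\log(1/\delta))$ dyadic length classes (an affordable logarithmic loss); within a class where all blocks have length $\sim l_0$, use the tiling argument of Proposition \ref{tiling} to place them, with overlap at most $8$ and after splitting into two translated grids, inside canonical intervals of a single dyadic length $l\sim l_0$; and only then apply the canonical-partition decoupling (Lemma \ref{nonzero}, or Corollary \ref{bs2} in your quadratic reduction) \emph{at the matching scale} $l^2\gtrsim\delta/M$ rather than at scale $\delta$, so that each canonical cap meets only $O(1)$ intervals of the partition and the final triangle-inequality-plus-Cauchy--Schwarz step costs only a constant. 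Note that once this length-sorting/tiling device is in hand, the two-scale coarsening you build around it becomes unnecessary at the top level --- the paper derives Theorem \ref{quaddom} directly from Lemma \ref{nonzero} this way --- so I would encourage you to isolate and prove the tiling reduction first; without it, the proposal does not yield a $\delta^{-\varepsilon}$ bound.
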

The rest of this subsection is devoted to the proof of this theorem. The main ingredients of the proof include Proposition \ref{number}, Lemma \ref{nonzero} and the following simple tiling argument.
\begin{prop}\label{tiling}
Let $0<l_0\leq 1/4$ and let $\mathcal P$ be a collection of disjoint subintervals of $[0,1]$ with lengths bounded above by $2l_0$ and below by $l_0$. Then there is $l\in 2^{-\N}$ with $l/l_0\in [4,8)$ and two subcollections $\mathcal U_i$, $i=1,2$ of $\mathcal P$, such that the following statements are true.
\begin{enumerate}
    \item For each $I\in \mathcal U_1$, there is some $1\leq j\leq l^{-1}$ such that $I\sub [(j-1)l,jl]$. Moreover, each such $[(j-1)l,jl]$ contains less than $8$ intervals $I$.
    \item For each $I\in \mathcal U_2$, there is some $1\leq j\leq l^{-1}$ such that $I\sub [(j-1/2)l,(j+1/2)l]\cap [0,1]$. Moreover, each such $[(j-1/2)l,(j+1/2)l]$ contains less than $8$ intervals $I$.
\end{enumerate}

\end{prop}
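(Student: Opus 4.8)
The plan is to build the two families by overlaying a single dyadic grid on $[0,1]$ at a well-chosen scale and sorting the intervals of $\mathcal P$ according to whether or not each one straddles a grid point. First I would take $l$ to be the unique power of two contained in the half-open interval $[4l_0,8l_0)$; such an $l$ exists because any interval of the form $[x,2x)$ contains exactly one power of two, and moreover $l\le 1$, since otherwise $l$ a power of two with $l>1$ forces $l\ge 2>8l_0$, i.e. $l_0>1/4$, contrary to hypothesis. Consequently $l^{-1}$ is a positive integer and the cells $[(j-1)l,jl]$, $1\le j\le l^{-1}$, tile $[0,1]$.

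Next I would classify the intervals of $\mathcal P$. Fix $I\in\mathcal P$. Since $|I|\le 2l_0\le l/2<l$, the open interior of $I$ meets the lattice $l\Z$ in at most one point. If it meets $l\Z$ in no point, then $I$ is contained in the closure of a single grid cell, i.e. $I\sub[(j-1)l,jl]$ for some $j$, and I place such $I$ in $\mathcal U_1$. If the interior of $I$ contains a (necessarily unique) grid point $kl$, then, because $I\sub[0,1]$, we must have $1\le k\le l^{-1}-1$, and from $|I|\le l/2$ we get $I\sub[kl-|I|,kl+|I|]\sub[(k-\tfrac12)l,(k+\tfrac12)l]$; I place such $I$ in $\mathcal U_2$ with index $j=k$. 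This produces $\mathcal P=\mathcal U_1\cup\mathcal U_2$ (in fact a disjoint union) together with the containments required in (1) and (2). Grid points that happen to be endpoints of some $I$ cause no trouble, precisely because the dichotomy is phrased in terms of the interior of $I$ rather than its closure, and the intersection with $[0,1]$ in (2) is only needed to trim the shifted cell at the right endpoint.

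Finally, the cardinality bounds are a pure volume count: the intervals of $\mathcal P$ are pairwise disjoint and each has length at least $l_0$, while every cell $[(j-1)l,jl]$ has length $l<8l_0$ and every shifted cell $[(j-\tfrac12)l,(j+\tfrac12)l]$ has length at most $l<8l_0$, so strictly fewer than $l/l_0<8$ intervals of $\mathcal P$ can lie inside any one of them. I do not expect a genuine obstacle here; the only points that need a line of care are verifying that $[4l_0,8l_0)$ contains a power of two that is at most $1$ (this is where $l_0\le 1/4$ enters) and organizing the grid-boundary cases, which is exactly why the sorting is carried out using the interior of each $I$.
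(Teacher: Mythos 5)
Your proposal is correct and follows essentially the same route as the paper: choose the dyadic scale $l\in[4l_0,8l_0)$, note $|I|\le 2l_0\le l/2$ so each $I$ fits either in a grid cell or in a half-shifted cell, and bound the multiplicity by the volume count $l/l_0<8$ using disjointness and the lower bound $|I|\ge l_0$. Your extra care with interiors at grid points and the explicit check that $l\le 1$ are fine refinements of the same argument.
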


\begin{proof}
Let $l\in 2^{-\N}$ be the smallest number such that $l\geq 4l_0 $, so $l/l_0\in [4,8)$. Each interval $I\in \mathcal P$ has length at most $2l_0\leq l/2$. Include $I$ inside $\mathcal U_1$ if it is fully contained in a dyadic interval $[(j-1)l,jl]$ for some $1\leq j\leq l^{-1}$. Otherwise, it has to be fully contained in $[(j-1/2)l,(j+1/2)l]$ for some $1\leq j\leq l^{-1}$, so we can include it in the collection $\mathcal U_2$. The bound on the number of intervals $I\in \mathcal P$ contained in each dyadic interval follows from the lower bound of the lengths of the intervals $I$.
\end{proof}

Now we can give a proof of Theorem \ref{quaddom}, in a series of steps.
\subsubsection{A few technical reductions}
If $\inf_{s\in [0,1]}|\phi''(s)|=0$, then by \eqref{curvaturegood}, we have $\norm {\phi''}_\infty=0$, so $\phi$ is linear and thus $D^\phi_p(\Gd)=1$. If $\inf_{s\in [0,1]}|\phi''(s)|>0$, since $\norm{\phi''}_\infty\leq M$, by rescaling in the vertical axis as in the proof of Lemma \ref{lemlin}, we may assume $\inf_{s\in [0,1]}\phi''(s)=1$.

Let $f\in L^p(\R^2)$ with Fourier support in $\mathcal N^{\phi}_{[0,1],\Gd}$ and $\mathcal P$ be a sub-admissible partition of $[0,1]$ for $\phi$ at scale $\Gd$. We may also assume $\Gd<M$, otherwise $\mathcal P$ is trivial and the bound $1$ works.

We invoke Proposition \ref{number} to get the coarser partition $\mathcal P'$. Since $\norm {\phi''}_\infty\leq M$, each interval $I\in \mathcal P'$, except possibly the last one, is a union of two adjacent intervals in $\mathcal P$ and has length bounded below by $2(\Gd/M)^{1/2}$. As a result, the number of intervals in $\mathcal P$ is bounded above by $(M/\Gd)^{1/2}+1$.

If the number of intervals in $\mathcal P$ is odd, then by triangle inequality and Cauchy-Schwarz we can disregard the Fourier truncation of $f$ over the last interval of $\mathcal P$. Thus, for the rest of the proof we may assume that the number of intervals in $\mathcal P$ is even and that each interval in $\mathcal P'$ has length bounded below by $2(\Gd/M)^{1/2}$. Since each interval $I\in \mathcal P'$ is a union of two adjacent intervals in $\mathcal P$, by triangle inequality and Cauchy-Schwarz it suffices to prove that
$$
\norm {f}_{L^p(\R^2)}\lesssim_{\Ge,M} \Gd^{-\Ge}\left(\sum_{I\in \mathcal P'} \norm{f_{I}}^2_{L^p(\R^2)}\right)^{\frac 1 2}.
$$

We now partition the collection $\mathcal P'$ according to the lengths of intervals. Let $I^*$ be an interval in $\mathcal P'$ with maximum length (we may of course assume $|I^*|\leq 1/2$). Let $\mathcal P'_1$ be the collection of intervals in $\mathcal P'$ with length $>|I^*|/2$. For each $k\geq 2$, let $\mathcal P'_k$ be the collection of intervals in $\mathcal P'$ with length in the range $(2^{-k}|I^*|,2^{-k+1}|I^*|]$. Since each interval in $\mathcal P'$ has length bounded below by $2(\Gd/M)^{1/2}$, we have only $O_M(\log (\Gd^{-1}))$ many such collections. Since we can afford logarithmic losses, it suffices to show for each $\mathcal P'_k$ that
$$
\norm {f_{\cup \{I:I\in \mathcal P'_k\}}}_{L^p(\R^2)}\lesssim_{\Ge,M} \Gd^{-\Ge}\left(\sum_{I\in \mathcal P'_k} \norm{f_{I}}^2_{L^p(\R^2)}\right)^{\frac 1 2}.
$$
Now fix such $k\geq 1$. We can apply Proposition \ref{tiling} with $l_0=2^{-k}|I^*|\leq 1/4$ and $\mathcal P=\mathcal P'_k$ to get the corresponding $l=l(k)$ and $\mathcal U_i=\mathcal U_i(k)$, $i=1,2$. Also, note that $l\geq 8(\Gd/M)^{1/2}$ since $l_0\geq 2(\Gd/M)^{1/2}$.

By triangle inequality and Cauchy-Schwarz again, it suffices to prove for $i=1,2$ that
$$
\norm {f_{J_i}}_{L^p(\R^2)}\lesssim_{\Ge,M} \Gd^{-\Ge}\left(\sum_{I\in \mathcal U_i} \norm{f_{I}}^2_{L^p(\R^2)}\right)^{\frac 1 2},
$$
where $J_i:=\cup \{I:I\in \mathcal U_i\}$, $i=1,2$. 

\subsubsection{Applying Lemma \ref{nonzero}}
We deal with $i=1$ first. We have $\Gd\leq Ml^2$ by our choice of $l$. By \eqref{curvaturegood}, we may apply Lemma \ref{nonzero} with scale $\Gd=l^2$ to get
$$
\norm {f_{J_1}}_{L^p(\R^2)}\lesssim_{\Ge,M} l^{-\Ge}\left(\sum_{j=1}^{l^{-1}} \norm{f_{[(j-1)l,jl]\cap J_1}}^2_{L^p(\R^2)}\right)^{\frac 1 2}.
$$
But by Proposition \ref{tiling}, for each $j$, $[(j-1)l,jl]\cap J_1$ is equal to a union of less than $8$ intervals $I\in \mathcal U_1$. By triangle inequality and Cauchy-Schwarz, we have
$$
\norm {f_{J_1}}_{L^p(\R^2)}\lesssim_{\Ge,M} l^{-\Ge}\left(\sum_{I\in \mathcal U_1} \norm{f_{I}}^2_{L^p(\R^2)}\right)^{\frac 1 2}\lesssim_{\Ge,M}\Gd^{-\Ge}\left(\sum_{I\in \mathcal U_1} \norm{f_{I}}^2_{L^p(\R^2)}\right)^{\frac 1 2}.
$$
For the case $i=2$, we note that translating the domain of $\phi$ to the right by $\Gs:=l/2$ is equivalent to changing $\phi(s)$ to $\tilde \phi(s):=\phi(s+\Gs)$, $s\in [0,1-\Gs]$. Since the domain of $\tilde \phi$ is now a subset of $[0,1]$, the conditions in \eqref{curvaturegood} still hold. Hence, the same argument for the case $i=1$ works in this case.

The conclusion for quadratic polynomials follows immediately if we take $M$ to be the absolute constant that bounds the quadratic coefficient.

\section{A rescaling theorem}\label{secpolyres}

We need a little more notation. If $\mathcal P$ is a partition of a compact interval $I_0$ and $J$ is a union of consecutive intervals in $\mathcal P$, we denote by $\mathcal P(J)$ the partition of $J$ using the intervals in $\mathcal P$. We then prove the following rescaling theorem.

\begin{thm}[Rescaling]\label{cubicres}

Let $1<p<\infty$. Let $\Gd>0$, $\phi$ be $C^2$ and $\mathcal P$ be 
a sub-admissible partition of $[0,1]$ for $\phi$ at scale $\Gd$. Then for any $J=[\Ga,\Gb]$ which is a union of consecutive intervals in $\mathcal P$, there exists another $C^2$-function $\psi$, such that for any $f\in L^p(\R^2)$ with Fourier support in $\mathcal N^\phi_{J,\Gd}$, 
\begin{equation}\label{cubicresineq}
    \norm {f}_{L^p(\R^2)}\leq D^\psi_p((\Gb-\Ga)^{-1}\Gd)\left(\sum_{I\in\mathcal P(J)}\norm {f_I }^2_{L^p(\R^2)}\right)^{\frac 1 2},
\end{equation}
where $D_p^\psi(\Gd)$ was defined in \eqref{decouplingsinglefunction}.

In particular, if $\phi\in \mathcal A_d$ (as defined at the beginning of Section \ref{secproofmain}), then $\psi$ can be also taken to be in $\mathcal A_d$. In this case, we also have
\begin{equation}\label{gencubicresineq}
    \norm {f}_{L^p(\R^2)}\leq D^d_p((\Gb-\Ga)^{-1}\Gd)\left(\sum_{I\in\mathcal P(J)}\norm {f_I }^2_{L^p(\R^2)}\right)^{\frac 1 2},
\end{equation}
where $D^d_p(\Gd)$ was defined in \eqref{defdecAd}.

\end{thm}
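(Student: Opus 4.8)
The plan is to implement an affine change of variables on $\R^2$ that takes the curved neighbourhood $\mathcal N^\phi_{J,\Gd}$ over the sub-interval $J=[\Ga,\Gb]$ to a curved neighbourhood over $[0,1]$, and to track how the Fourier truncations and the thickness transform. First I would write the horizontal rescaling $s=\Ga+(\Gb-\Ga)u$, $u\in[0,1]$, and set $L=\Gb-\Ga$. Define the affine-shifted phase $\phi_1(u):=\phi(\Ga+Lu)-\phi(\Ga)-\phi'(\Ga)L u$; this kills the value and slope at the left endpoint, so that the $\Gd$-neighbourhood of the graph of $\phi$ over $J$ becomes, after a shear in the vertical direction (as in the proof of Lemma \ref{lemlin}), exactly the $\Gd$-neighbourhood of the graph of $\phi_1$ over $[0,1]$, with the same thickness $\Gd$. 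Then I would rescale vertically: put $\psi(u):=L^{-2}\phi_1(u)=L^{-2}\bigl(\phi(\Ga+Lu)-\phi(\Ga)-\phi'(\Ga)Lu\bigr)$, which is $C^2$, and note $\psi''(u)=\phi''(\Ga+Lu)$. Defining $g$ via $\hat g(u,v)=\hat f\bigl(\Ga+Lu,\,L^{-2}v+\text{(shear terms)}\bigr)$ (a composition of the shear from Lemma \ref{lemlin} and the two diagonal rescalings), one gets that $g\in L^p(\R^2)$ is Fourier supported in $\mathcal N^\psi_{[0,1],L^{-1}\Gd}$: the horizontal scaling by $L$ against the vertical scaling by $L^2$ produces the thickness $L^{-2}\cdot L\cdot\Gd=L^{-1}\Gd$, matching the statement. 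All the relevant change-of-variables Jacobians are powers of $L$ and hence cancel between the two sides, so $\norm{g}_p=\text{const}\cdot\norm{f}_p$ and $\norm{g_I}_p=\text{const}\cdot\norm{f_{I'}}_p$ with the same constant for each piece; the key point is that under $s=\Ga+Lu$ the partition $\mathcal P(J)$ of $J$ pulls back to a partition $\widetilde{\mathcal P}$ of $[0,1]$.

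Next I would verify that $\widetilde{\mathcal P}$ is a sub-admissible partition of $[0,1]$ for $\psi$ at scale $L^{-1}\Gd$. This is where the scaling invariance of property $P(r)$ does the work: if $I,J$ are adjacent intervals of $\mathcal P(J)$ whose union fails $P(\Gd)$ for $\phi$, then by the Taylor characterisation \eqref{Taylor}, $\sup|\phi''|\,|I\cup J|^2>4\Gd$; pulling back via $s=\Ga+Lu$ multiplies lengths by $L^{-1}$ and leaves $\sup|\psi''|=\sup|\phi''|$ on the corresponding interval, so the pulled-back union has $\sup|\psi''|\cdot(\text{length})^2>4\Gd L^{-2}=4(L^{-1}\Gd)\cdot L^{-1}$; one checks this is exactly the failure of $P(L^{-1}\Gd)$ for $\psi$ (the factors $L^{-1}$ and $L^{-2}$ pair up because a single interval of $\mathcal P(J)$ has length $\le L$, using sub-admissibility together with Proposition \ref{number}, or more simply because $J$ itself is covered by $\mathcal P$ and $L\le 1$). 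Once $\widetilde{\mathcal P}$ is sub-admissible for $\psi$ at scale $L^{-1}\Gd$, applying Definition \ref{decouplingsinglefunction} of $D^\psi_p$ to $g$ and rescaling back yields \eqref{cubicresineq}.

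Finally, for the $\mathcal A_d$ statement: since $\psi''(u)=\phi''(\Ga+Lu)$ is a restriction and reparametrisation of $\phi''$, we get $\norm{\psi''}_\infty\le\norm{\phi''}_\infty\le 1$, so the normalisation defining $\mathcal A_d$ is preserved. For membership in $\mathcal D_d$ one checks that the defining condition in Definition \ref{polylike} is itself affine-invariant: for any interval $J'\sub[0,1]$, the set $B(\psi,J')$ corresponds under $u\mapsto\Ga+Lu$ to $B(\phi, \Ga+LJ')$ — the quantities $\sup|\psi''|$, $|J'|\sup|\psi'''|$ scale homogeneously (with $\psi'''(u)=L\phi'''(\Ga+Lu)$, and $|J'|$ scales by $L$, so the product $|J'|\sup_{J'}|\psi'''|$ equals $|LJ'|\sup|\phi'''|$), whence $B(\psi,J')$ is the affine preimage of $B(\phi,\Ga+LJ')$, still a union of at most $C_d$ relatively open subintervals with $|B(\psi,J')|\le C_d\Gs^{1/d}|J'|$ since $L$-dilations preserve this ratio. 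Therefore $\psi\in\mathcal D_d(C_d)$, hence $\psi\in\mathcal A_d$, and replacing $D^\psi_p$ by its supremum $D^d_p$ over $\mathcal A_d$ gives \eqref{gencubicresineq}. The main obstacle I anticipate is purely bookkeeping: getting the composition of the Lemma \ref{lemlin} shear with the two anisotropic dilations to land the thickness at precisely $L^{-1}\Gd$ and to confirm all the $L$-powers in the Jacobians cancel — conceptually routine, but the exponents $L^{1}$ horizontal versus $L^{2}$ vertical must be matched carefully.
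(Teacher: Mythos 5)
There is a genuine error in your change of variables: you perform a \emph{parabolic} rescaling (horizontal dilation by $L^{-1}$, vertical dilation by $L^{-2}$, where $L=\Gb-\Ga$), but then claim the vertical thickness of the neighbourhood becomes $L^{-2}\cdot L\cdot \Gd = L^{-1}\Gd$. The horizontal dilation does not act on the vertical thickness at all: if $\hat g(u,v)=\hat f(\Ga+Lu,\,L^{2}v+\phi(\Ga)+\phi'(\Ga)Lu)$, then the support condition $|t-\phi(s)|\leq \Gd$ becomes $|v-\psi(u)|\leq L^{-2}\Gd$ with your $\psi=L^{-2}\phi_1$. So $g$ is Fourier supported in $\mathcal N^{\psi}_{[0,1],L^{-2}\Gd}$, a strictly \emph{thicker} neighbourhood than $\mathcal N^{\psi}_{[0,1],L^{-1}\Gd}$ (since $L\leq 1$), and Definition \ref{decouplingsinglefunction} of $D^{\psi}_p(L^{-1}\Gd)$ therefore cannot be applied to $g$. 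The same slippage appears in your sub-admissibility check, where the pulled-back failure condition $\sup|\psi''|\cdot(\text{length})^2>4L^{-2}\Gd$ is ``exactly'' the failure of $P(L^{-2}\Gd)$, not of $P(L^{-1}\Gd)$ (it does imply the latter, but that is not the scale at which your $g$ lives). Carried out consistently, your construction proves the theorem with $D^{\psi}_p((\Gb-\Ga)^{-2}\Gd)$ in place of $D^{\psi}_p((\Gb-\Ga)^{-1}\Gd)$ --- a different statement from the one claimed.

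The fix is to drop the parabolic normalisation and use the \emph{isotropic} dilation the paper uses: $\psi(s')=(\Gb-\Ga)^{-1}\phi(\Ga+(\Gb-\Ga)s')$, so that both axes are dilated by $L^{-1}$ and the thickness lands exactly at $L^{-1}\Gd$. With this $\psi$ one has $\psi''=L\,\phi''(\Ga+L\cdot)$ and $\psi'''=L^{2}\phi'''(\Ga+L\cdot)$, so $\norm{\psi''}_\infty\leq\norm{\phi''}_\infty\leq 1$, the sub-admissibility of the pulled-back partition at scale $L^{-1}\Gd$ follows from \eqref{Taylor} exactly as you outline, and the set $B(\psi,K)$ is again the affine preimage of $B(\phi,\Ga+LK)$ because the inequality in \eqref{defB} is homogeneous in the common factor $L$. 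The remaining components of your argument --- the shear invariance from Lemma \ref{lemlin}, the Jacobian bookkeeping, the pullback of $\mathcal P(J)$, and the verification that $\psi\in\mathcal A_d$ --- are structured correctly and go through verbatim once the dilation is corrected.
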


\begin{proof}
By a change of variables, we have
$$
f_{[\Ga,\Gb]}(x,y)=\int_{-\Gd}^{\Gd}\int_\Ga^\Gb \hat f(s,\phi(s)+t)e(x s+y(\phi(s)+t))ds dt.
$$
Define $s'=(s-\Ga)/(\Gb-\Ga)\in [0,1]$. Then by direct computation,
\begin{align*}
    f_{[\Ga,\Gb]}(x,y)
    &=(\beta-\Ga)\int_{-\Gd}^{\Gd}e(ty)\int_0^1 \hat f(s,\phi(s)+t)\\
    &\cdot e(x(\Gb-\Ga)s'+\Ga)\\
    &\cdot e\left(y \phi(\Ga+(\Gb-\Ga)s')\right)ds'dt.
\end{align*}

We define $\psi$ by
\begin{equation}\label{defpsi}
    \psi(s')=(\Gb-\Ga)^{-1}\phi(\Ga+(\Gb-\Ga)s').
\end{equation}
Thus $\psi(s')=(\Gb-\Ga)^{-1}\phi(s)$.

Define $t'=(\Gb-\Ga)^{-1}t$ and $(x',y')=(\Gb-\Ga)(x,y)$. We also define another function $F$ by the relation $\hat F(s',\psi(s')+t')=\hat f(s,\phi(s)+t)$. More explicitly, for any $(u,v)\in \R^2$, $\hat F$ is defined as
$$
\hat F(u,v)=\hat f((\Gb-\Ga)u+\Ga,(\Gb-\Ga)v).
$$
Then we see that $F\in L^p(\R^2)$ and is Fourier supported on $\mathcal N^\psi_{[0,1],(\Gb-\Ga)^{-1}\Gd}$. Thus, in the above notation, we arrive at 
\begin{align*}
  f_{[\Ga,\Gb]}(x,y)
&=e(\Ga x)\int_{-\frac{\Gd}{(\Gb-\Ga)}}^{\frac{\Gd}{(\Gb-\Ga)}}e(t'y')\int_0^1 \hat F(s',\psi(s')+t') e(x's')e(y'\psi(s'))ds' dt'\\
&=e(\Ga x)F(x',y').
\end{align*}
Also, observe that the following partition of $[0,1]$
$$
\mathcal P':=\left\{I'=\frac {I-\Ga}{\Gb-\Ga}:I\in \mathcal P\right\}
$$
is sub-admissible for $\psi$ at scale $(\Gb-\Ga)^{-1}\Gd$. Applying the definition of $D^\psi_p((\Gb-\Ga)^{-1}\Gd)$, we have
$$
\norm {F}_{L^p(\R^2)}\leq D^\psi_p((\Gb-\Ga)^{-1}\Gd)\left(\sum_{I'\in\mathcal P'}\norm {F_{I'} }^2_{L^p(\R^2)}\right)^{\frac 1 2}.
$$
Rescaling back (as in the proof of Lemma \ref{lemlin}), we obtain
$$
\norm {f}_{L^p(\R^2)}\leq D^\psi_p((\Gb-\Ga)^{-1}\Gd)\left(\sum_{I\in\mathcal P(J)}\norm {f_{I} }^2_{L^p(\R^2)}\right)^{\frac 1 2},
$$
which proves \eqref{cubicresineq}.

To prove \eqref{gencubicresineq}, it suffices to show that $\psi\in \mathcal A_d$ whenever $\phi\in \mathcal A_d$. For the bound on the second derivative, we have
$$
\psi''(s')=(\Gb-\Ga)\phi''(\Ga+(\Gb-\Ga)s'),
$$
which, together with the fact that $0\leq \Ga<\Gb\leq 1$, implies $\norm {\psi''}_\infty\leq \norm {\phi''}_\infty\leq 1$. 
It remains to show that for any $0<\Gs<C_d^{-d}$ and any interval $K\sub [0,1]$, the bad set $B(\psi,K)$ (as defined in \eqref{defB}) is a union of at most $C_d$ disjoint subintervals relatively open in $K$ and also satisfies $|B(\psi,K)|\leq C_d \Gs^{1/d}|K|$. But by definition, we have exactly
$$
B(\psi,K)=\left\{\frac{s-\Ga}{\Gb-\Ga}:s\in B(\phi,(\Gb-\Ga)K+\Ga)\right\}.
$$
Since $\phi\in \mathcal A_d\sub \mathcal D_d$, we have $B(\phi,(\Gb-\Ga)K+\Ga)$ is a union of at most $C_d$ disjoint subintervals relatively open in $(\Gb-\Ga)K+\Ga$ and also satisfies 
$$
|B(\phi,(\Gb-\Ga)K+\Ga)|\leq C_d \Gs^{1/d}(\Gb-\Ga)|K|.
$$
Thus $B(\psi,K)$ is also a union of at most $C_d$ disjoint subintervals relatively open in $K$ and also satisfies $|B(\psi,K)|\leq C_d \Gs^{1/d}|K|$.

\end{proof}

\section{Proof of the bootstrap inequality}\label{bootproof}

Using Theorem \ref{quaddom} and Theorem \ref{cubicres}, we now prove Theorem \ref{mainbootthm}. The main idea is to partition $[0,1]$ into subintervals according as whether $|\phi''|$ is large. On subintervals where $|\phi''|$ is large, we use decoupling for curves with nonvanishing curvature, which is Theorem \ref{quaddom}. On subintervals where $|\phi''|$ is small, we use the rescaling Theorem \ref{cubicres}, and use the assumption in Definition \ref{polylike} that such intervals have small total length.

\begin{proof}
Let $\phi\in \mathcal A_d=\mathcal A_d(C_d)$ be given, and let $M>C_d$. We will find $K=K(d)$ and $C_{\Ge,M}$ such that
\begin{equation}\label{hypoeqn}
    D^\phi_p(\Gd)\leq K(C_{\Ge,M} \Gd^{-\Ge}+ \sup_{\Gd'\geq M\Gd}D^d_p(\Gd')).
\end{equation}
Let $\Gd>0$ and $\mathcal P$ be a sub-admissible partition of $[0,1]$ for $\phi$ at scale $\Gd$. Let $f\in L^p(\R^2)$ with Fourier support in $\mathcal N^{\phi}_{[0,1],\Gd}$. 

Take $B=B(\phi,[0,1])$ as in \eqref{defB} with $\Gs:=C_d^{-d}M^{-d}$. Split $\mathcal P$ into 3 subcollections
\begin{align*}
    &\mathcal P_1:=\{I\in \mathcal P:I\sub B\}\\
    &\mathcal P_2:=\{I\in \mathcal P:I\sub [0,1]\bs B\}\\
    &\mathcal P_3:=\{I\in \mathcal P:I\cap B\neq \varnothing\text{ and } I\bs B\neq \varnothing\}.
\end{align*}
Denote $f_i:=f_{\cup \{I:I\in \mathcal P_i\}}$, $i=1,2,3$.

By definition of $\mathcal D_d$, $\mathcal P_3$ has cardinality bounded above by $C_d$, so by triangle inequality and Cauchy-Schwarz it suffices to consider $\mathcal P_1$ and $\mathcal P_2$.

Consider $\mathcal P_1$ first. Write $B=\cup_i^N J_i$ where $N\leq C_d$ and $|J_i|\leq |B|\leq C_d\Gs^{1/d}=M^{-1}$. Apply \eqref{gencubicresineq} to $J_i$ to get
$$
\norm {(f_1)_{J_i}}_{L^p(\R^2)}\leq D^d_p(|J_i|^{-1}\Gd)\left(\sum_{I\in\mathcal P(J_i)}\norm {f_I }^2_{L^p(\R^2)}\right)^{\frac 1 2}\leq \sup_{\Gd'\geq M\Gd}D^d_p(\Gd')\left(\sum_{I\in\mathcal P(J_i)}\norm {f_I }^2_{L^p(\R^2)}\right)^{\frac 1 2}.
$$
Thus
$$
\norm {f_1}_{L^p(\R^2)}\lesssim_d \sup_{\Gd'\geq M\Gd}D^d_p(\Gd')\left(\sum_{I\in\mathcal P_1}\norm {f_I }^2_{L^p(\R^2)}\right)^{\frac 1 2}.
$$
Now we come to $\mathcal P_2$. Write $[0,1]\bs B=\cup_i^{N'}J'_i$ where $N'\leq C_d+1$.

Apply \eqref{cubicresineq} to $J'_i:=[\Ga,\Gb]$ to get
$$
\norm {(f_2)_{J'_i}}_{L^p(\R^2)}\leq D^\psi_p(|J_i|^{-1}\Gd)\left(\sum_{I\in\mathcal P(J'_i)}\norm {f_I }^2_{L^p(\R^2)}\right)^{\frac 1 2},
$$
where $\psi(s)=(\Gb-\Ga)^{-1}\phi(\Ga+(\Gb-\Ga)s)$ as in \eqref{defpsi}.

By similar argument at the end of Section \ref{secpolyres}, since $\phi\in \mathcal A_d$, we have $\psi$ satisfies both inequalities of \eqref{curvaturegood} with $M$ replaced by $C_dM^d$. Hence, by Theorem \ref{quaddom}, we have for $0<\Gd\leq 1$ that
$$
D^\psi_p(|J_i|^{-1}\Gd)\lesssim_{\Ge,M,p}(|J_i|^{-1}\Gd)^{-\Ge}\leq \Gd^{-\Ge}.
$$
Thus, by triangle inequality and Cauchy-Schwarz,
$$
\norm {f_2}_{L^p(\R^2)}\lesssim_{\Ge,d,M,p} \Gd^{-\Ge}\left(\sum_{I\in\mathcal P_2}\norm {f_I }^2_{L^p(\R^2)}\right)^{\frac 1 2}.
$$
Combining the estimates above, we have
$$
\norm {f}_{L^p(\R^2)}\leq K\left( C_{\Ge,d,M,p}\Gd^{-\Ge}+\sup_{\Gd'\geq M\Gd}D^d_p(\Gd')\right)\left(\sum_{I\in\mathcal P}\norm {f_I }^2_{L^p(\R^2)}\right)^{\frac 1 2},
$$
for some absolute constant $K=K(d)$. Since $f$ and $\mathcal P$ are arbitrary, we have \eqref{hypoeqn}.

\end{proof}
Therefore, the proof of Theorem \ref{poly} is complete.

\section{Appendix}
In the appendix, we prove Proposition 2.1 of \cite{MR4078083} in the special case when $\nu\geq 0$.

\begin{cor}[Proposition 2.1 of \cite{MR4078083}]
Let $\Gd\in 2^{-2\N}$, $d\in [3,\infty)$ and $2\leq p\leq 6$. For each interval $I\sub [0,1]$, let
$$
E_I g(x,y)=\int_I g(s)e(xs+ys^d)ds.
$$
Let $T$ be an axis-parallel rectangle with side lengths $\Gd^{-1}\times \Gd^{-d/2}$ (in the $x$ and $y$ directions, respectively). Let $\eta$ be a nonnegative Schwartz function such that $|\eta|\geq 1$ on $T$ and $\widehat {\eta}$ is supported on the dual rectangle $T^*$ which has side lengths $\Gd\times \Gd^{d/2}$.

Denote $\Delta_k=[(k-1)\Gd^{1/2},k\Gd^{1/2}]$, $1\leq k\leq \Gd^{-1/2}$. Then for any $g\in L^1([0,1])$, we have
$$
\norm {E_{[0,1]}g}_{L^p(T)}\lesssim_\Ge \Gd^{-\Ge}\left(\sum_{k=1}^{\Gd^{-1/2}}\norm{E_{\Delta_{k}}g}^2_{L^p(\eta^p)}\right)^{\frac 1 2},
$$
where the implicit constant depends on $p$ and $\Ge$ only.
\end{cor}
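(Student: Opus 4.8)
\emph{Setup.} The plan is to deduce the corollary from the main decoupling estimate, Theorem \ref{poly} (applicable to $s\mapsto s^d$ and all its affine rescalings, which remain degree-$d$ polynomials when $d\in\mathbb Z$; for general $d\ge 3$ one uses instead that $s^d\in\mathcal D_{d-2}$, as remarked after Lemma \ref{lemboot}, and Theorem \ref{uniform}). Throughout we may assume $\delta$ is small, the case $\delta\gtrsim_d1$ being trivial. First put $F:=\eta\,E_{[0,1]}g\in L^p(\mathbb R^2)$. Since $|\eta|\ge1$ on $T$ we have $\|E_{[0,1]}g\|_{L^p(T)}\le\|F\|_{L^p(\mathbb R^2)}$. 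As $\widehat\eta$ is supported in $T^*=[-\delta,\delta]\times[-\delta^{d/2},\delta^{d/2}]$, the function $\widehat F=\widehat\eta*\widehat{E_{[0,1]}g}$ is supported in $\{(s,s^d):s\in[0,1]\}+T^*$; using $|(s+\xi_1)^d-s^d|\lesssim_d|\xi_1|$ on this range, this set lies in $\mathcal N^{s^d}_{[-\delta,1+\delta],C_d\delta}$, which after a negligible affine rescaling of $[-\delta,1+\delta]$ onto $[0,1]$ (not affecting the $\Delta_k$ or $T$ up to constants) we regard as $\mathcal N^{\phi}_{[0,1],C_d\delta}$ for a degree-$d$ polynomial $\phi$. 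A smooth partition of unity on $[0,1]$ subordinate to the $\delta$-fattenings of the $\Delta_k$, together with boundedness of Fourier truncation (the Remark after Definition \ref{fourierres}) and bounded overlap, yields the matching bound $\big(\sum_k\|F_{\Delta_k}\|_{L^p}^2\big)^{1/2}\lesssim_p\big(\sum_k\|E_{\Delta_k}g\|_{L^p(\eta^p)}^2\big)^{1/2}$. Hence it suffices to prove $\|F\|_{L^p}\lesssim_{d,\varepsilon}\delta^{-\varepsilon}\big(\sum_k\|F_{\Delta_k}\|_{L^p}^2\big)^{1/2}$.

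\emph{Step 2: decouple into the admissible partition.} Apply Theorem \ref{poly} to $F$ at scale $C_d\delta$ with the admissible partition $\{I_j\}_j$ of $[0,1]$ for $s^d$, for which $a_j\sim j^{2/d}\delta^{1/d}$ and $1\le j\lesssim\delta^{-1/2}$ (existence: Proposition \ref{exist}); since $\|(s^d)''\|_\infty\lesssim_d1$ this gives $\|F\|_{L^p}\lesssim_{d,\varepsilon}\delta^{-\varepsilon}\big(\sum_j\|F_{I_j}\|_{L^p}^2\big)^{1/2}$. Write $c_j$ for the midpoint of $I_j$, so $\mu_j:=\sup_{I_j}|(s^d)''|\sim c_j^{d-2}$ and $|I_j|\sim(\delta/\mu_j)^{1/2}$. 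The partition $\{\Delta_k\}$ is strictly finer than $\{I_j\}$ near the origin, so it is not sub-admissible at scale $\delta$ and Theorem \ref{poly} cannot be applied to $F$ directly with $\{\Delta_k\}$; the point of the next step is that the shape of $T$ recovers the finer decoupling \emph{within} each $I_j$ separately.

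\emph{Step 3: refining on each $I_j$ via the thinness of $T^*$.} Fix $j$. Every point of $\{(s',s'^d):s'\in[0,1]\}+T^*$ whose first coordinate lies in $I_j$ is within vertical distance $\theta_j:=\delta^{d/2}+d(a_j+\delta)^{d-1}\delta$ of the curve $s^d$, so $\widehat{F_{I_j}}$ is supported in the curved slab $\mathcal N^{s^d}_{I_j,\theta_j}$, which is \emph{much thinner} than $\mathcal N^{s^d}_{I_j,\delta}$ in the degenerate range. The arithmetic heart is that $\theta_j\lesssim_d\mu_j\delta$ for all $j$: the term $d(a_j+\delta)^{d-1}\delta\lesssim_d a_j^{d-2}\delta\sim\mu_j\delta$ since $a_j\le1$, while $\delta^{d/2}\lesssim_d\mu_j\delta$ amounts to $c_j^{d-2}\gtrsim\delta^{(d-2)/2}$, valid because $c_j\gtrsim\delta^{1/d}\ge\delta^{1/2}$ for $d\ge3$ — and at the most degenerate interval it reduces to $(d-2)^2\ge0$, which is exactly why $\delta^{-d/2}$ is the correct $y$-length of $T$. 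Consequently, for the (all but $O_d(1)$) indices $j$ with $c_j$ below a dimensional constant and $\delta$ small, each adjacent pair in $\{\Delta_k\cap I_j\}$ satisfies $\sup|(s^d)''|\cdot(2\delta^{1/2})^2\sim\mu_j\delta>\theta_j$, i.e. it fails property $P(\theta_j)$, so $\{\Delta_k\cap I_j\}$ is a \emph{sub-admissible} partition of $I_j$ for $s^d$ at scale $\theta_j$; rescaling $I_j$ onto $[0,1]$ as in Theorem \ref{cubicres} (the phase stays a degree-$d$ polynomial) and applying Theorem \ref{poly} once more at scale $\theta_j/|I_j|$, the loss $\big((\theta_j/|I_j|)^{-1}\|\psi_j''\|_\infty\big)^{\varepsilon}$ being $\lesssim_d\delta^{-C_d\varepsilon}$, gives $\|F_{I_j}\|_{L^p}\lesssim_{d,\varepsilon}\delta^{-\varepsilon}\big(\sum_{\Delta_k\subseteq I_j}\|F_{\Delta_k}\|_{L^p}^2\big)^{1/2}$ after renaming $\varepsilon$. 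For the remaining $O_d(1)$ indices, $I_j$ contains $O_d(1)$ intervals $\Delta_k$ and Cauchy–Schwarz gives the same bound. Summing over $j$ and combining with Steps 1–2 proves the corollary.

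\emph{Main obstacle.} Everything rests on Step 3 and, specifically, on recognising that the Fourier support of $\eta E_{[0,1]}g$, restricted over each admissible rectangle $I_j$, sits inside a curved neighbourhood $\mathcal N^{s^d}_{I_j,\theta_j}$ with $\theta_j\ll\delta$ in the degenerate range, whose thickness — forced by the $\delta^{d/2}$-thinness of $T^*$, equivalently the elongation $\delta^{-d/2}$ of $T$ — is precisely calibrated so that the canonical $\delta^{1/2}$-partition becomes sub-admissible at scale $\theta_j$ and the main theorem becomes applicable a second time. The remaining ingredients — the $\eta$-reduction, the bounded-overlap bookkeeping for the fattened partitions of unity, and the affine rescalings — are routine given the results already proved.
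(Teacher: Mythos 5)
Your argument is correct in substance but takes a genuinely different route from the paper's own proof. The paper never introduces the admissible partition of $[0,1]$ for $s^d$: it groups the canonical intervals into the $O(\log \delta^{-1})$ dyadic blocks $I_n=\bigcup_{2^{n-1}\le k<2^n}\Delta_k$, observes that $\{\Delta_k\}$ restricted to $I_n$ is \emph{already} sub-admissible for $s^d$ at the block-dependent scale $a_n=2^{dn}\delta^{d/2}$, checks that $\Gamma_n+T^*\subseteq \mathcal N^{s^d}_{I_n,Ca_n}$ by essentially the same arithmetic you carry out for $\theta_j$, applies Theorem \ref{uniform} once per block, and recombines the blocks by triangle inequality and Cauchy--Schwarz at an affordable logarithmic loss. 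Your version replaces the logarithmically many blocks by the $\sim\delta^{-1/2}$ intervals of the admissible partition, which forces a genuine two-stage decoupling: a first application of Theorem \ref{poly}/\ref{uniform} at scale $\delta$ to reach the $I_j$, and a second application at the finer scale $\theta_j$ inside each $I_j$ after rescaling as in Theorem \ref{cubicres}. Both proofs hinge on the identical key observation --- the $\delta^{d/2}$-thinness of $T^*$ is exactly calibrated so that the canonical $\delta^{1/2}$-grid becomes sub-admissible at the local vertical thickness of $\Gamma+T^*$ --- so neither buys more generality; the paper's is leaner (one decoupling stage, no interaction between two non-nested partitions), while yours avoids the logarithmic block count at the price of bookkeeping for the $\Delta_k$ straddling the endpoints of the $I_j$ and for the truncated end pieces, which you gloss over but which is routine via the multiplier bound in the Remark after Definition \ref{fourierres}.

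Two small slips, neither fatal. First, the set of indices $j$ with $c_j$ above a fixed dimensional constant has cardinality comparable to $\delta^{-1/2}$, not $O_d(1)$; but your fallback for such $j$ is applied index by index ($I_j$ then contains $O_d(1)$ canonical intervals, so Cauchy--Schwarz suffices there), so only the count is wrong, not the argument. Second, for non-integer $d$ the rescaled phase in your Step 3 is no longer a polynomial, so the second application must go through Theorem \ref{uniform} and \eqref{gencubicresineq} (after normalising $\norm{\phi''}_\infty$) rather than Theorem \ref{poly}; this is the same move the paper makes with $s^d\in\mathcal D_{d-2}$.
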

\begin{proof}
Split the partition $\mathcal P:=\{\Delta_k:1\leq k\leq \Gd^{-1/2}\}$ into sub-partitions $\mathcal P_n$, $1\leq n\leq \log_2(\Gd^{-1/2})$, defined as
$$
\mathcal P_n:=\{\Delta_k:2^{n-1}\leq k< 2^n\}.
$$
Denote $I_n$ as the union of intervals in $\mathcal P_n$. Since we can afford logarithmic losses, by triangle inequality and Cauchy-Schwarz it suffices to prove that for each $n$, we have
\begin{equation}\label{small1}
    \norm {E_{I_n}g}_{L^p(T)}\lesssim_\Ge \Gd^{-\Ge}\left(\sum_{\Delta_k\in \mathcal P_n}\norm{E_{\Delta_{k}}g}^2_{L^p(\eta^p)}\right)^{\frac 1 2}.
\end{equation}
Note that for each $n$, $\mathcal P_n$ is a sub-admissible partition of $I_n$ for $s^d$ at scale $a_n:=2^{dn}\Gd^{d/2}$. Therefore, by Theorem \ref{uniform} applied to the monomial $s^d\in \mathcal D_{ d -2}$, for any $f\in L^p(\R^2)$ with Fourier support in $\mathcal N^{s^d}_{I_n,Ca_n}$ (where $C=C_d$ is an absolute constant), we have
\begin{equation}\label{small}
   \norm {f}_{L^p(\R^2)}\lesssim_\Ge \Gd^{-\Ge}\left(\sum_{\Delta_k\in \mathcal P_n} \norm{f_{\Delta_k}}^2_{L^p(\R^2)}\right)^{\frac 1 2}. 
\end{equation}
The proof that \eqref{small} implies \eqref{small1} is purely technical and routine. Such idea can be found in more detail in \cite{Li}, but we include it here for completeness. The idea is to use a standard mollification technique that passes a global neighbourhood version of a decoupling inequality to its local extension version, with the choice of appropriate scales.

Let $I'_n=\cup_{k=2^{n-1}+1}^{2^n-2}\Delta_k$ be the union of intervals in $\mathcal P_k$ excluding the first and the last one. By triangle inequality and Cauchy-Schwarz, it suffices to show
\begin{equation}\label{04}
   \norm {E_{I'_n}g}_{L^p(T)}\lesssim_\Ge \Gd^{-\Ge}\left(\sum_{k=2^{n-1}}^{2^n-1}\norm{E_{\Delta_k}g}^2_{L^p(\eta^p)}\right)^{\frac 1 2}. 
\end{equation}
On the left hand side of \eqref{04}, we have
$$
\norm {E_{I'_n}g}_{L^p(T)}\leq \norm {\eta E_{I'_n}g}_{L^p(\R^2)}.
$$
Observe that $\widehat{E_{I'_n}g}*\widehat{\eta}$ is a smooth function supported on the Minkowski sum of $\Gamma_n$ and $T^*$ where $\Gamma_n$ is the graph of $s^d$ over $I'_n$. Since each $\Delta_k$ has length $\Gd^{1/2}$ and $T^*$ has length $\Gd$ in the horizontal direction, we see that $\Gamma_n+T^*$ does not exceed $[0,1]$ in the horizontal axis. In addition, for each $2^{n-1}\leq k\leq 2^n-1$, $2^{n-2}\leq k'\leq 2^n-1$, we have $(\eta E_{\Delta_{k'}} g)_{\Delta_k}\neq 0$ only if $k'=k-1$, $k$ or $k+1$. 

In the vertical axis, for $(s,s^d)\in \Gamma_n$ and $(u,v)\in T^*$, we have
\begin{align*}
    |s^d+v-(s+u)^d|
    &\leq |v|+|u|\sum_{j=0}^{d-1}(s+u)^j s^{d-1-j}\\
    &\lesssim_d \Gd^{\frac d 2}+\Gd \cdot 2^{n(d-1)}\Gd^{\frac {d-1} 2}\\
    &\lesssim_d 2^{dn}\Gd^{\frac d 2},
\end{align*}
since $2^n\leq \Gd^{-1/2}$ and for $s\in I'_n$ we have $2^{n-1}\Gd^{1/2}\leq s\leq 2^n \Gd^{1/2}$. Thus we have
\begin{equation*}
\Gamma_n+T^*\sub \mathcal N^{s^3}_{I_n,Ca_n}.
\end{equation*}

Thus, for $2^{n-1}\leq k\leq 2^n-1$,
\begin{align*}
    \norm {(\eta E_{I_n'}g)_{\Delta_k}}_{L^p(\R^2)}
    &\leq \norm {(\eta E_{\Delta_{k-1}}g)_{\Delta_k}}_{L^p(\R^2)}+\norm{(\eta E_{\Delta_k}g)_{\Delta_k}}_{L^p(\R^2)}+\norm{(\eta E_{\Delta_{k+1}}g)_{\Delta_k}}_{L^p(\R^2)}\\
    &\lesssim\norm {\eta E_{\Delta_{k-1}}g}_{L^p(\R^2)}+\norm{\eta E_{\Delta_k}g}_{L^p(\R^2)}+\norm{\eta E_{\Delta_{k+1}}g}_{L^p(\R^2)},
\end{align*}
since the Fourier multiplier $(s,t)\mapsto 1_I(s)$ for any interval $I$ has $L^p(\R^2)\to L^p(\R^2)$ operator norm bounded by an absolute constant, whenever $1<p<\infty$.

Thus, by \eqref{small} with $f=\eta E_{I'_n}g$,
\begin{align*}
    \norm {\eta E_{I'_n}g}_{L^p(\R^2)}
    &\lesssim_\Ge \Gd^{-\Ge}\left(\sum_{k=2^{n-1}}^{2^n-1}\norm{ (\eta E_{I'_n}g)_{\Delta_k}}^2_{L^p(\R^2)}\right)^{\frac 1 2}\\
    &\lesssim \Gd^{-\Ge}\left(\sum_{k=2^{n-1}}^{2^n-1}\left(\norm {\eta E_{\Delta_{k-1}}g}_{L^p(\R^2)}+\norm{\eta E_{\Delta_k}g}_{L^p(\R^2)}+\norm{\eta E_{\Delta_{k+1}}g}_{L^p(\R^2)}\right)^2\right)^{\frac 1 2}\\
    &\lesssim \Gd^{-\Ge}\left(\sum_{k=2^{n-1}}^{2^n-1}\norm{\eta E_{\Delta_k}g}^2_{L^p(\R^2)}\right)^{\frac 1 2},
\end{align*}
where in the last line we have used Cauchy-Schwarz.
\end{proof}

	\begin{bibdiv}
		\begin{biblist}
		
		\bib{MR4078083}{article}{
   author={Biswas, C.},
   author={Gilula, M.},
   author={Li, L.},
   author={Schwend, J.},
   author={Xi, Y.},
   title={$\ell^2$ decoupling in $\mathbb{R}^2$ for curves with vanishing
   curvature},
   journal={Proc. Amer. Math. Soc.},
   volume={148},
   date={2020},
   number={5},
   pages={1987--1997},
}

\bib{MR3374964}{article}{
   author={Bourgain, J.},
   author={Demeter, C.},
   title={The proof of the $l^2$ decoupling conjecture},
   journal={Ann. of Math. (2)},
   volume={182},
   date={2015},
   number={1},
   pages={351--389},
}

\bib{MR3592159}{article}{
   author={Bourgain, J.},
   author={Demeter, C.},
   title={A study guide for the $l^2$ decoupling theorem},
   journal={Chin. Ann. Math. Ser. B},
   volume={38},
   date={2017},
   number={1},
   pages={173--200},
}

\bib{MR3736493}{article}{
   author={Bourgain, J.},
   author={Demeter, C.},
   title={Decouplings for curves and hypersurfaces with nonzero Gaussian
   curvature},
   journal={J. Anal. Math.},
   volume={133},
   date={2017},
   pages={279--311},
   issn={0021-7670},
}

\bib{MR3548534}{article}{
   author={Bourgain, J.},
   author={Demeter, C.},
   author={Guth, L.},
   title={Proof of the main conjecture in Vinogradov's mean value theorem
   for degrees higher than three},
   journal={Ann. of Math. (2)},
   volume={184},
   date={2016},
   number={2},
   pages={633--682},
   issn={0003-486X},
   review={\MR{3548534}},
   doi={10.4007/annals.2016.184.2.7},
}

\bib{MR3971577}{book}{
   author={Demeter, C.},
   title={Fourier restriction, decoupling, and applications},
   series={Cambridge Studies in Advanced Mathematics},
   volume={184},
   publisher={Cambridge University Press, Cambridge},
   date={2020},
   pages={xvi+331},
}

\bib{Guo}{webpage}{
   author={Guo, S.},
   author={Li, Z. K.},
   author={Yung, P.-L.},
   author={Zorin-Kranich, P.},
   title={A short proof of $l^2$ decoupling for the moment curve},
   journal={preprint},
   date={2019},
   note={available at},
   myurl={https://arxiv.org/abs/1912.09798},
}

\bib{Kemp}{webpage}{
   author={Kemp, D.},
   title={Decouplings for Surfaces of Zero Curvature},
   journal={preprint},
   date={2019},
   note={available at},
   myurl={https://arxiv.org/abs/1908.07002},
}

\bib{Li}{webpage}{
   author={Li, Z. K.},
   title={An $l^2$ decoupling interpretation of efficient congruencing: the parabola},
   journal={preprint},
   date={2018},
   note={available at},
   myurl={https://arxiv.org/abs/1805.10551},
}

\bib{MR4024357}{book}{
   author={Li, Z. K.},
   title={Decoupling for the Parabola and Connections to Efficient
   Congruencing},
   note={Thesis (Ph.D.)--University of California, Los Angeles},
   publisher={ProQuest LLC, Ann Arbor, MI},
   date={2019},
   pages={169},
}

\bib{MR3939285}{article}{
   author={Pierce, L. B.},
   title={The Vinogradov mean value theorem [after Wooley, and Bourgain,
   Demeter and Guth]},
   note={S\'{e}minaire Bourbaki. Vol. 2016/2017. Expos\'{e}s 1120--1135},
   journal={Ast\'{e}risque},
   number={407},
   date={2019},
   pages={Exp. No. 1134, 479--564},
   issn={0303-1179},
   isbn={978-2-85629-897-8},
   review={\MR{3939285}},
   doi={10.24033/ast},
}

\bib{MR2288738}{article}{
   author={Pramanik, M.},
   author={Seeger, A.},
   title={$L^p$ regularity of averages over curves and bounds for associated
   maximal operators},
   journal={Amer. J. Math.},
   volume={129},
   date={2007},
   number={1},
   pages={61--103},
   issn={0002-9327},
   review={\MR{2288738}},
   doi={10.1353/ajm.2007.0003},
}

\bib{Shadrin}{article}{
   author={Shadrin, A.},
   title={Twelve proofs of the Markov inequality},
   journal={Approximation theory: a volume dedicated to Borislav Bojanov, D.~K.~Dimitrov, G.~Nikolov and R.~Uluchev, Eds.},
   date={2004},
   publisher={Prof.~M.~Drinov Acad.~Publ.~House, Sofia},
   pages={233--298},
}

\bib{MR1879821}{article}{
   author={Stein, E. M.},
   author={Wainger, S.},
   title={Oscillatory integrals related to Carleson's theorem},
   journal={Math. Res. Lett.},
   volume={8},
   date={2001},
   number={5-6},
   pages={789--800},
   issn={1073-2780},
   review={\MR{1879821}},
   doi={10.4310/MRL.2001.v8.n6.a9},
}

\bib{MR1800068}{article}{
   author={Wolff, T.},
   title={Local smoothing type estimates on $L^p$ for large $p$},
   journal={Geom. Funct. Anal.},
   volume={10},
   date={2000},
   number={5},
   pages={1237--1288},
   issn={1016-443X},
   review={\MR{1800068}},
   doi={10.1007/PL00001652},
}

		\end{biblist}
	\end{bibdiv}

\end{document}